\newtheorem{theorem}{Theorem}[section]
\newtheorem{lemma}[theorem]{Lemma}
\newtheorem{definition}[theorem]{Definition}
\newtheorem{corollary}[theorem]{Corollary}
\newtheorem{example}[theorem]{Example}
\newtheorem{remark}[theorem]{Remark}
\numberwithin{equation}{section}
\title[]{A note on the differentiability of Palmer's topological equivalence for discrete systems}
\author[]{\'A. Casta\~neda, N. Jara*}
\address{Universidad de Chile, Departamento de Matem\'aticas. Casilla 653, Santiago, Chile}
\email{nestor.jara@ug.uchile.cl, castaneda@uchile.cl}
\subjclass[2010]{34D09, 37C60, 37D25}
\keywords{Nonautonomus hyperbolicity, Nonautonomous difference equation, Smooth linearization}
\thanks{This research has been partially supported by ANID, Beca Nacional de Magister 22200774.}
\thanks{This research has been partially supported by FONDECYT Regular 1200653}
\date{\today}
\begin{document}
\maketitle

\begin{abstract}
    A linear system of difference equations and a nonlinear perturbation are considered. 
    We propose sufficient conditions to ensure that the homeomorphism of topological equivalence between them is actually a $C^1$ diffeomorphism. These conditions consider that the linear part satisfies a dichotomy on the positive half-line, while the perturbation satisfies boundness and Lipschitzness conditions, and anothers that are tailored for our goal. The family of dichotomies that we study consider (but are not limited to) the exponential and nonuniform exponential dichotomies. We discuss the possibility of extending this result to a higher class of differentiability.
\end{abstract}

\section{Introduction}

The concept of topological equivalence between two systems, linear and nonlinear, has played an important role in the study of differential equations, mainly due to the difficulty to find analytical solutions for them. It has been more fructiferous to study a linear, simpler system and from it obtain information about the nonlinear original system. Some facts in this analysis are the search for bounded solutions and characterization of their asymptotic behaviour. The method consists in establishing a continuous correspondence between solutions of both systems.

The problem of linearization of the flow of solutions of ordinary autonomous differential equations began in the decade of 1960 with the classical P. Hartman's theorem \cite{Hartman}, which ensures the existence of a local homeomorphism between a nonlinear system and and its linearization around a fixed point, under the assumption that the linear system satisfies an hyperbolicity condition. During the next decade the result was generalized to the construction of a global homeomorphism thanks to C. Pugh \cite{Pugh} and A. Reinfeld \cite{Reinfeld}.\\

The extension of the previous result is due to result of K. J. Palmer \cite{Palmer}, whom in 1973 studied a couple of continuous systems given by: 

\begin{equation}\label{89}
    \dot{x}=A(t)x
\end{equation}

\begin{equation}\label{90}
    \dot{y}=A(t)y+f(t,y).
\end{equation}

He used the exponential dichotomy property associated to the nonautonomous linear system, which mimics the hyperbolocity of the autonomous framework, namely, establish the existence of stable and unstable manifolds for solutions. Moreover, Palmer's result considers a family of nonlinear perturbations for the system (\ref{90}) that satisfy smallness and Lipschitz properties. Palmer's result was the first to establish the topological equivalence (a more precise definition will be provided later) between the systems (\ref{89}) and (\ref{90}). However, most often a exponential dichotomy represents highly restrictive hypothesis, which is why during the next decade  \cite{Palmer2}, \cite{Palmer3}  the same author generalized his result to consider a wider family of dichotomies and perturbations.

Contemporaneously to K. J. Palmer, it was already being developed the analogous theory in the discrete framework (we refer the reader to C. Coffman \cite{Coffman}). These contributions were retaken in lately 80's and early 90's, when G. Papaschinopoulos's work (\cite{Papas1}, \cite{Papas2}, \cite{Papas3}, \cite{Papas4}) was very prominent and helped to characterize dichotomies and asymptotic behavior of solutions.

In the decade of 1990 many authors came back to study K. J. Palmer's problem in its continuous version, admitting wider hypothesis. Among them we recall the contributions of  Z. Lin and Y-X. Lin \cite{Lin}, L. Jiang \cite{Jiang} and J.L. Shi and K.Q. Xiong \cite{Shi}.

In relation to new dichotomies, M. Pinto and R. Naulin (\cite{Pinto1}, \cite{Pinto2}) stand out for introducing the concept of $(h,k)-$dichotomies. We also highlight L. Barreira and C. Valls (\cite{Barreira1}, \cite{Barreira2}, \cite{Barreira3}), whom in early 2000's extended the family of dichotomies, introducing the concept of nonuniform dichotomy. During the last decade these concepts have been used by many authors in both the discrete \cite{Crai}, and continuous framework \cite{Bento}, even in integrated contexts as impulsive equations \cite{Fenner}.

Since 2015, authors \'A. Casta\~{n}eda and G. Robledo \cite{Castaneda4} have studied conclusions that can be obtained when studying the differentiability of this homeomorphism, particulary when restricting the problem to the half real line. 

Other authors such as D. Dragi$\check{c}$evi\'c \textit{et al.}  \cite{Dragicevic2, Dragicevic} have also been interested in these properties of differentiability in a continuous and discrete context respectively . Recently, faced with the problem in its discrete context, the authors on \cite {Castaneda3} have studied the differentiability of said homeomorphism under the assumption that the linear system satisfies a nonuniform contraction.

\textbf{Novelty of this work}. Our work follows the results initiated by \'A. Casta\~{n}eda and G. Robledo in \cite{Castaneda4}, and is heavily based on their continuation along with P. Gonz\'alez in \cite{Castaneda3}, since we use some of their results and strategies to find a K.J. Palmer's homeomorphism between solutions of (\ref{98}) and (\ref{99}), and our results of topological equivalence are over $J=\mathbb{Z}^+$, instead of $\mathbb{Z}$. In the section of preliminaries we give precise definitions for these concepts and make explicit which tools and results we are considering from \cite{Castaneda3}. We also present the set of conditions under which we will work and make a brief discussion about their appearances in literature. The main difference between our work and \cite{Castaneda3}, in terms of differentiability, is that we allow the existence of nonempty unstable manifolds for the linear system; furthermore, we do not use spectral theory in order to obtain $C^1-$differentiability such as it is employ in \cite{Dragicevic}. 

In the third section, we propose conditions that allows us to prove that K.J. Palmer's homeomorphism, which we previously studied, is a $C^1-$diffeomorphism. We give examples in order to achieve these conditions, including systems that satisfy exponential and nonuniform exponential dichotomies.

In the fourth section we study a generalization of the previous conditions in order to obtain higher order derivatives. Finally, in the last section, we propose an algebraic set of functions and an operator that can be inductively applied in order to find conditions for arbitrary high order derivatives.

\section{Preliminaries}

In this work, we study the following nonautonomous discrete systems:
\begin{equation}\label{98}
    x(k+1)=A(k)x(k)
\end{equation}
\begin{equation}\label{99}
    y(k+1)=A(k)y(k)+f(k,y(k)),
\end{equation}
where $A:\mathbb{Z}^+\to \mathcal{M}_{d\times d}(\mathbb{R})$ and $f:\mathbb{Z}^+\times \mathbb{R}^d\to \mathbb{R}^d$, satisfying properties that will be given later. Their solutions in each case is a function $x,y:\mathbb{Z}^+\to \mathbb{R}^d$, and we denote $n\mapsto x(n,k,\xi)$ and $n\to y(n,k,\eta)$ for the solutions of (\ref{98}) and (\ref{99}) that pass through $\xi$ and $\eta$ respectively on $n=k$.\\

Furthermore, we will consider the following properties:\\
\begin{itemize}
   
    \item [\textbf{(d0)}] $A:\mathbb{Z}^+\to M_{d\times d}(\mathbb{R})$ is non singular (\textit{i.e.} has invertible images) and uniformly bounded, that is to say, there exists $M> 1$ such that:
    \begin{equation*}
    \max\left\{ \sup_{k \in \mathbb{Z}^+}\norm{A(k)}, \sup_{k\in \mathbb{Z}^+}\norm{A^{-1}(k)}\right\}=M.
    \end{equation*}

    \item [\textbf{(d1)}] The system (\ref{98}) has a non uniform dichotomy, \textit{i.e.} there are two invariant complementary projectors $P(\cdot)$ and $Q(\cdot)$ such that $P(n)+Q(n)=I$ for every $n\in \mathbb{Z}^+$, a non-negative sequence $D$ and a monotone decreasing sequence convergent to zero $h$ with $h(0)=1$ such that:
    $$\left\{ \begin{array}{lc}
            \norm{\Phi(k,n)P(n)}\leq D(n)
            \left( \mathlarger{\frac{h(k)}{h(n)}}\right), &\forall k \geq n\geq 0 \\
            \\ \norm{\Phi(k,n)Q(n)}\leq D(n)
            \left( \mathlarger{\frac{h(n)}{h(k)}}\right), &\forall 0\leq k\leq n,
             \end{array}
   \right.$$
   where $\Phi(k,n)$ is the transition matrix for (\ref{98}).
   
   \item [\textbf{(d2)}] There exist $\mu, \gamma:\mathbb{Z}^+\to \mathbb{R}^+$ sequences such that for every $k\in \mathbb{Z}^+$ and every pair $(y,\Tilde{y})\in \mathbb{R}^d\times \mathbb{R}^d$ we have:
    $$|f(k,y)-f(k,\Tilde{y})|\leq \gamma(k)|y-\Tilde{y}|\text{ ; }|f(k,y)|\leq \mu(k).$$
    
    \item [\textbf{(d3)}] The sequence $\mu$ defined above verifies:
    $$\sum_{j=0}^{\infty}\norm{\mathcal{G}(k,j+1)}\mu(j)\leq p<\infty\text{ for every }k\in \mathbb{Z}^+,$$
    where $\mathcal{G}$ is the Green's operator asociated to the dichotomy established in \textbf{(d1)}.
    
    \item [\textbf{(d4)}] The sequence $\gamma$ defined above verifies:
    $$\sum_{j=0}^{\infty}\norm{\mathcal{G}(k,j+1)}\gamma(j)\leq q<1\text{ for every }k \in \mathbb{Z}^+.$$

    \item [\textbf{(d5)}] The sequence $\gamma$ and the matrix function $A$ verify:
    $$\norm{A^{-1}(\ell)\gamma(\ell)}<1\text{ for every }\ell \in \mathbb{Z}^+.$$
    
    \item [\textbf{(d6)}] The map $u\mapsto f(k,u)$ and its derivates respect to $u$ up to the order $r$ ($r\geq 1$) are continuous functions of $(k,u)\in \mathbb{Z}^+\times \mathbb{R}^d$ and $\sup_{u\in \mathbb{R}^d}\norm{\frac{\partial f}{\partial u}(k,u)}<+\infty $ is bounded. 
    
    \item [\textbf{(d7)}] For each fixed $m\in \mathbb{Z}^+$, the sequences $D$, $h$ and $\gamma$ satisfy:
    $$\mathlarger{\sum_{j=m}^{\infty}}\left(D(j+1)h(j+1)\gamma(j)\left[ \mathlarger{\prod_{p=m}^{j-1}}\norm{A(p)}+\gamma(p)\right]\right)<+\infty.$$
\end{itemize}

\begin{remark}
The transition matrix for (\ref{98}) is the matrix function $\Phi:\mathbb{Z}^+\times\mathbb{Z}^+\to \mathcal{M}_{d\times d}(\mathbb{R})$ given by:
\begin{equation*}
    \Phi(k,n)= \left\{ \begin{array}{lcc}
             A(k-1)A(k-2)\cdots A(n) &  \text{if } & k > n \\
             \\ I & \text{ if }& k=n \\
             \\ A^{-1}(k)A^{-1}(k+1)\cdots A^{-1}(n-1) & \text{if } & k<n.
             \end{array}
   \right.
\end{equation*}
\end{remark}

\begin{remark}
Projectors $P(\cdot)$ and $Q(\cdot)$ have been called invariant, which means that they are under the assumption that for every  $k,n\in \mathbb{Z}^+$ they satisfy:
$$P(k)\Phi(k,n)=\Phi(k,n)P(n)\text{ and }Q(k)\Phi(k,n)=\Phi(k,n)Q(n).$$
\end{remark}

\begin{remark}
Green's operator for the system (\ref{98}) asociated to the dichotomy \textbf{(d1)} is the matrix function $\mathcal{G}:\mathbb{Z}^+\times\mathbb{Z}^+\to \mathcal{M}_{d\times d}(\mathbb{R})$ given by:
\begin{equation*}
    \mathcal{G}(k,n)= \left\{ \begin{array}{cc}
             \Phi(k,n)P(n)  & \forall k \geq n\geq 0 \\
             \\ -\Phi(k,n)Q(n) & \forall 0\leq k<n.
             \end{array}
   \right.
\end{equation*}

And it is easily deduced that it satisfies:
$$\mathcal{G}(k+1,n)=A(k)\mathcal{G}(k,n)\text{ and }\mathcal{G}(n,n)=I+A(n-1)\mathcal{G}(n-1,n).$$
\end{remark}

\begin{remark}\label{185}
Condition \textbf{(d5)} was used in \cite{Castaneda3} in order to ensure that solutions for the system (\ref{99}) can be uniquely backwards continued.

However, we have noted that some authors (\cite{Dragicevic}, for example) assume that this backwards continuation can be uniquely found even without this condition.
\end{remark}

Now, we recall the concept of topological equivalence for discrete systems.

\begin{definition}\label{102}

Let $J\subset \mathbb{Z}$ be a interval of integer numbers. The systems (\ref{98}) and (\ref{99}) are $J$-topologically equivalent if there is a function $H:J\times \mathbb{R}^d\to \mathbb{R}^d$ that satisfies:

\begin{itemize}
    \item [i)] If $x(k)$ is a solution of (\ref{98}), then $H[k,x(k)]$ is a solution of (\ref{99}).
    \item [ii)] $H(k,u)-u$ is bounded over $J\times \mathbb{R}^d$.
    \item [iii)] For each fixed $k\in J$, the map $u\mapsto H(k,u)$ is an homeomorphism of $\mathbb{R}^d$.
\end{itemize}

Moreover, the function $u\mapsto G(k,u)=H^{-1}(k,u)$ satisfies ii) and iii) and maps solutions of (\ref{99}) into solutions of (\ref{98}).
\end{definition}

However, as it was explained before, it turns out to be interesting to study which properties can be deduced when studying the differentiability of such homeomorphism. In the discrete case the characterization is even better than in the continuous framework, because as a difference equation may be understood as a dynamical system where a discrete group acts on the space $\mathbb{R}^d$, the space can be split on fibers, and so as soon as we find a diffeomorphism associated to one of such fibers, the other can be replicated inductively. This idea motivates the following definition:

\begin{definition}
The systems (\ref{98}) and (\ref{99}) are $C^r$-topologically equivalent on $\mathbb{Z}^+$ if they are topologically equivalent on $\mathbb{Z}^+$ and $u\mapsto H(k,u)$ is a diffeomorphism of class $C^r$, with $r\geq 1$ for each fixed $k\geq0$.
\end{definition}

Most of the conditions we have stated have been used several times in the literature. We now highlight some of the results that inspire this work.

\begin{theorem}\label{116} \cite[Theorem 2.1]{Castaneda3} If the conditions \textbf{(d0)-(d5)} hold, then the systems (\ref{98}) and (\ref{99}) are $\mathbb{Z}^+-$topologically equivalent.

\end{theorem}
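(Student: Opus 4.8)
The plan is to construct the map $H$ explicitly via a fixed-point argument in a space of bounded sequences, following the classical Palmer scheme adapted to the half-line $\mathbb{Z}^+$ and to the nonuniform dichotomy \textbf{(d1)}. Fix $(k,\xi)\in\mathbb{Z}^+\times\mathbb{R}^d$ and consider, for a solution $x(\cdot)=x(\cdot,k,\xi)$ of (\ref{98}), the difference system satisfied by $z(n)=y(n)-x(n)$ where $y$ solves (\ref{99}). One seeks a bounded solution $z^*(\cdot)$ of the variation-of-constants type equation
\begin{equation*}
z(n)=\sum_{j=0}^{\infty}\mathcal{G}(n,j+1)\,f\bigl(j,\,x(j)+z(j)\bigr),
\end{equation*}
and then defines $H[k,\xi]=\xi+z^*(k)$. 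The first step is to show this series defines a contraction on the Banach space of bounded sequences: boundedness of the image follows from \textbf{(d2)} and \textbf{(d3)} (the $\mu$-summability against $\|\mathcal{G}\|$), while the contraction constant is exactly the $q<1$ from \textbf{(d4)}; hence a unique bounded fixed point $z^*$ exists, and by construction $n\mapsto x(n)+z^*(n)$ is the solution of (\ref{99}) through $H[k,\xi]$ at $n=k$, so property i) of Definition \ref{102} holds and ii) holds with bound $p$.

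The second step is to construct the inverse $G$. Here one runs the symmetric argument starting from a solution $y(\cdot)$ of (\ref{99}): one looks for a bounded sequence $w(\cdot)$ so that $n\mapsto y(n)-w(n)$ solves (\ref{98}), i.e. $w$ solves $w(n)=\sum_{j}\mathcal{G}(n,j+1)f(j,y(j))$ — note this is now a genuine (non-implicit) sum, bounded by $p$ via \textbf{(d3)}. Setting $G[k,\eta]=\eta-w^*(k)$ gives a map sending solutions of (\ref{99}) to solutions of (\ref{98}) with $G(k,u)-u$ bounded. The routine but essential verification is that $H(k,\cdot)$ and $G(k,\cdot)$ are mutual inverses: given $\eta$, apply $H$ to $G[k,\eta]$ and use uniqueness of bounded solutions of the relevant difference equation (this is where \textbf{(d4)}, $q<1$, is used again to force the two candidate correction terms to coincide), and symmetrically for $G\circ H$. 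Continuity of $u\mapsto H(k,u)$ follows from continuous dependence of the fixed point on $\xi$, again a consequence of the uniform contraction; together with the existence of a continuous inverse this gives iii).

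The point at which condition \textbf{(d5)} enters — and which I expect to be the main technical obstacle — is ensuring that the objects above are well defined on all of $\mathbb{Z}^+$ rather than only forward in time. Because the systems live on the half-line, the Green's-operator sum ranges over indices $j$ with $j+1$ possibly less than $n$, so one must make sense of $\Phi(n,j+1)$ and of solutions of (\ref{99}) for small $n$; this requires that solutions of (\ref{99}) can be uniquely continued backwards, which is precisely what \textbf{(d5)} guarantees (cf. Remark \ref{185}): the condition $\|A^{-1}(\ell)\gamma(\ell)\|<1$ makes the backward step map $u\mapsto A^{-1}(\ell)\bigl(u-f(\ell,u)\bigr)$ a contraction, hence invertible. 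Once backward uniqueness is in hand, the verification that $H[k,x(k)]$ is independent of the representative and genuinely maps whole solutions to whole solutions is straightforward. Since all of this is carried out in \cite[Theorem 2.1]{Castaneda3}, the cleanest route here is to invoke that theorem directly, citing \textbf{(d0)}–\textbf{(d5)} as exactly its hypotheses; I would present the sketch above only to make the paper self-contained and to fix the explicit formulas for $H$ and $G$ that the differentiability arguments of Section 3 will need.
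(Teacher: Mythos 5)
Your construction coincides with the one the paper recalls from \cite[Theorem 2.1]{Castaneda3}: the fixed point $z^*$ of the operator $\Theta$ via Banach's theorem with contraction constant $q$ from \textbf{(d4)}, the explicit sums (\ref{112}) and (\ref{115}) defining $w^*$ and $G$, boundedness from \textbf{(d3)}, and the use of \textbf{(d5)} exactly as in Remark \ref{185} to continue solutions of (\ref{99}) backwards. The paper likewise invokes the cited theorem rather than reproving it, so your proposal is correct and takes essentially the same approach.
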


In the proof of  \ref{116}, some calculation were developed and we highlight some of them in order to facilitate read of this article.
Let us define
\begin{eqnarray}\label{112}
w^*(k;(m,\eta))&=&-\sum_{j=0}^\infty \mathcal{G}(k,j+1)f(j,y(j,m,\eta)) \nonumber \\ 
\nonumber\\
&=&-\sum_{j=0}^{k-1} \Phi(k,j+1)P(j+1)f(j,y(j,m,\eta))\\
\nonumber\\
&&+\sum_{j=k}^{\infty} \Phi(k,j+1)Q(j+1)f(j,y(j,m,\eta)) \nonumber
\end{eqnarray}

On the other hand, for each $(m,\xi)\in \mathbb{Z}^+\times \mathbb{R}^d$ we define the map $\Theta:\ell^{\infty}(\mathbb{Z}^+,\mathbb{R}^d)\to \ell^{\infty}(\mathbb{Z}^+,\mathbb{R}^d)$ given by:
$$(\Theta \phi)(k;(m,\xi))=\sum_{j=0}^\infty \mathcal{G}(k,j+1)f(j,x(j,m,\xi)+\phi(j;(m,\xi)))$$

Using Banach's fixed point Theorem we conclude the existence of a unique fixed point
\begin{equation}\label{117}
    z^*(k;(m,\xi)))=\sum_{j=0}^{+\infty}\mathcal{G}(k,j+1)f(j,x(j,m,\xi)+z^*(j;(m,\xi))).
\end{equation}

It is easily verified that this map is a solution for the initial values problem:
\begin{equation}\label{106}
    \left\{ \begin{array}{ccl}
            z(k+1) & = & A(k)z(k)+f(k,x(k,m,\eta)+z(k))\\
            \\z(0) & = & - \mathlarger{\sum}_{j=0}^\infty \Phi(0,j+1)Q(j+1)f(j,x(j,m,\eta)+z^*(j;(m,\xi))).
\end{array}
\right.\end{equation}

By unicity of solutions we have
\begin{equation}\label{103}
    x(k,m,\xi)=x(k,p,x(p,m,\xi))\text{  , for every }k,p,m\in \mathbb{Z}^+.
\end{equation}

Analogously, we can verify
\begin{equation}\label{104}
    z^*(k;(m,\xi))=z^*(k;(p,x(p,m,\xi)))\text{  , for every }k,p,m\in \mathbb{Z}^+.
\end{equation}

For every fixed $k\in \mathbb{Z}^+$ we construct the maps $H(k,\cdot):\mathbb{R}^d\to \mathbb{R}^d$ and $G(k,\cdot):\mathbb{R}^d\to \mathbb{R}^d$ given by
\begin{equation}\label{105}\left\{ \begin{array}{ccl}
            H(k,\xi) & = & \xi + \mathlarger{\sum}_{j=0}^{+\infty} \mathcal{G}(k,j+1)f(j,x(j,k,\xi)+z^*(j;(k,\xi))) \\
            \\ & = & \xi+z^*(k;(k,\xi))
\end{array}
\right.\end{equation}

and
\begin{equation}\label{115}\left\{ \begin{array}{ccl}
            G(k,\eta) & = & \eta - \mathlarger{\sum}_{j=0}^{+\infty} \mathcal{G}(k,j+1)f(j,y(j,k,\eta)) \\
            \\ & = & \eta+w^*(k;(k,\eta)).
\end{array}
\right.\end{equation}

It has been proved that $H$ and $G=H^{-1}$ define the homeomorphism that shows the topological equivalence between (\ref{98}) and (\ref{99}). \\

In order to study additional properties of $G$, note that for $n<k$ we have:

\begin{equation}\label{118}
y(n,k,\eta)=\Phi(n,k)\eta-\sum_{j=n}^{k-1}\Phi(n,j+1)f(j,y(j,k,\eta)).
\end{equation}

Which implies that
\begin{eqnarray*}
\Phi(k,n)y(n,k,\eta)&=&\eta-\sum_{j=n}^{k-1}\Phi(k,j+1)f(j,y(j,k,\eta))\\
&=&\eta-\sum_{j=n}^{k-1}\Phi(k,j+1) P(j+1)f(j,y(j,k,\eta))\\
& &-\sum_{j=n}^{k-1}\Phi(k,j+1) Q(j+1)f(j,y(j,k,\eta)).
\end{eqnarray*}

In particular, for $n=0$ we have
\begin{eqnarray*}
\Phi(k,0)y (0,k,\eta)&=&\eta-\sum_{j=0}^{k-1}\Phi(k,j+1) P(j+1)f(j,y(j,k,\eta))\\
& &-\sum_{j=0}^{k-1}\Phi(k,j+1) Q(j+1)f(j,y(j,k,\eta))\\
&=&\eta-\sum_{j=0}^{k-1}\Phi(k,j+1) P(j+1)f(j,y(j,k,\eta))\\
& &+\sum_{j=k}^{\infty}\Phi(k,j+1) Q(j+1)f(j,y(j,k,\eta))\\
& &-\sum_{j=0}^{\infty}\Phi(k,j+1) Q(j+1)f(j,y(j,k,\eta))\\
&=&\eta-\sum_{j=0}^{\infty}\mathcal{G}(k,j+1)f(j,y(j,k,\eta))-\sum_{j=0}^{\infty}\Phi(k,j+1) Q(j+1)f(j,y(j,k,\eta))\\
&=&G(k,\eta)-\Phi(k,0)\sum_{j=0}^{\infty}\Phi(0,j+1) Q(j+1)f(j,y(j,k,\eta)).
\end{eqnarray*}

Hence, using the definition of $k\mapsto w^*(0;(k,\eta))$ we conclude
\begin{eqnarray}\label{107}
G(k,\eta)&=&\Phi(k,0)\left\{ y(0,k,\eta)+\sum_{j=0}^\infty \Phi(0,j+1)Q(j+1)f(j,y(j,k,\eta))\right\} \nonumber \\
\\ 
&=&\Phi(k,0)\left\{ y(0,k,\eta)+w^*(0;(k,\eta))\right\}. \nonumber
\end{eqnarray}

Nevertheless, in order to prove the $C^r-$topologically equivalence, those authors had to impose stronger conditions than before, having to remain to the stable manifold, \textit{i.e.}, supposing the system (\ref{98}) admits a nonuniform contraction. Thus, condition \textbf{(d1)} is called as:

\begin{itemize}
      \item [\textbf{(D1)}] The system (\ref{98}) has a nonuniform dichotomy with trivial projector, \textit{i.e.} there exist a nonnegative sequence $D$ and a monotone decreasing sequence convergent to zero $h$, with $h(0)=1$, such that
    \begin{equation*}
    \norm{\Phi(k,n)}\leq D(n)\frac{h(k)}{h(n)}\text{   ,    }\forall k\geq n\geq 0.
    \end{equation*}
\end{itemize}

Furthermore,  \textbf{(d6)} is also considered, hence obtaining the next result:

\begin{theorem}\label{137} 
\cite[Lemma 2.3]{Castaneda3} If the conditions \textbf{(d0), (D1)} and \textbf{(d2)-(d6)} hold, then the systems (\ref{98}) and (\ref{99}) are $C^r-$topologically equivalent on $\mathbb{Z}^+$.
\end{theorem}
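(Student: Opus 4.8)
The plan is to bootstrap from Theorem \ref{116}, which applies here since \textbf{(D1)} is the special case of \textbf{(d1)} with $Q\equiv 0$ and $P\equiv I$, so that \textbf{(d0)}--\textbf{(d5)} all hold and (\ref{98})--(\ref{99}) are already $\mathbb{Z}^{+}$-topologically equivalent; what remains is to check that $u\mapsto H(k,u)$ is a $C^{r}$ diffeomorphism for each fixed $k$. The key simplification is that under \textbf{(D1)} the Green's operator degenerates: $\mathcal{G}(k,n)=\Phi(k,n)$ for $k\geq n$ and $\mathcal{G}(k,n)=0$ for $k<n$. Consequently every infinite series in the construction of $G$ and $H$ becomes a finite sum, and in particular $w^{*}(0;(k,\eta))=-\sum_{j=0}^{\infty}\mathcal{G}(0,j+1)f(j,y(j,k,\eta))=0$, because every index obeys $j+1\geq 1>0$. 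Feeding this into (\ref{107}) collapses it to
\[
G(k,\eta)=\Phi(k,0)\,y(0,k,\eta),\qquad k\in\mathbb{Z}^{+}.
\]
Since $A$ is nonsingular by \textbf{(d0)}, $\Phi(k,0)$ is a linear automorphism of $\mathbb{R}^{d}$; hence it suffices to prove that the backward solution map $\eta\mapsto y(0,k,\eta)$ is a $C^{r}$ diffeomorphism of $\mathbb{R}^{d}$, for then $G(k,\cdot)$ is one and so is its inverse $H(k,\cdot)$.

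I would reduce the backward map to a single step. For $j\in\mathbb{Z}^{+}$ write $g_{j}(\eta)=A(j)\eta+f(j,\eta)$, so that $y(k,0,\cdot)=g_{k-1}\circ\cdots\circ g_{0}$ (the identity for $k=0$) and $y(0,k,\cdot)=g_{0}^{-1}\circ\cdots\circ g_{k-1}^{-1}$. By \textbf{(d6)} each $g_{j}$ is $C^{r}$. From the Lipschitz bound in \textbf{(d2)} together with differentiability one gets $\norm{\tfrac{\partial f}{\partial u}(j,\eta)}\leq\gamma(j)$ for all $\eta$, so \textbf{(d5)} yields $\norm{A^{-1}(j)\tfrac{\partial f}{\partial u}(j,\eta)}\leq\norm{A^{-1}(j)}\gamma(j)<1$ and the Jacobian $Dg_{j}(\eta)=A(j)\bigl(I+A^{-1}(j)\tfrac{\partial f}{\partial u}(j,\eta)\bigr)$ is invertible at every point. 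Also $g_{j}$ is a bijection of $\mathbb{R}^{d}$: solving $g_{j}(\eta)=\zeta$ is the same as finding the fixed point of the contraction $\eta\mapsto A^{-1}(j)\bigl(\zeta-f(j,\eta)\bigr)$, with constant $\norm{A^{-1}(j)}\gamma(j)<1$ by \textbf{(d2)} and \textbf{(d5)}. The inverse function theorem then makes each $g_{j}$ a $C^{r}$ diffeomorphism of $\mathbb{R}^{d}$, and a finite composition of $C^{r}$ diffeomorphisms is again one; hence $y(0,k,\cdot)$ (and $y(k,0,\cdot)$) is a $C^{r}$ diffeomorphism, which finishes the argument.

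The hard part is precisely the single-step claim: one must know that $g_{j}$ is a genuine $C^{r}$ diffeomorphism of the entire space, not merely a smooth injective map — this needs global surjectivity and everywhere-invertibility of $Dg_{j}$, and it is exactly condition \textbf{(d5)} that delivers both (cf.\ Remark \ref{185}); without it one only controls existence and uniqueness of the backward continuation of (\ref{99}). Everything else — the degeneration of $\mathcal{G}$ under \textbf{(D1)}, the chain rule for finite compositions, and the use of Theorem \ref{116} for properties ii) and iii) of Definition \ref{102} — is routine. As a sanity check one could instead differentiate the (now finite) fixed-point identity (\ref{117}) directly, but the decomposition $G(k,\cdot)=\Phi(k,0)\circ y(0,k,\cdot)$ is cleaner and sidesteps term-by-term differentiation issues.
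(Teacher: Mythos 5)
Your argument is correct. Note first that the paper does not prove this statement itself: it is imported verbatim from \cite[Lemma 2.3]{Castaneda3}, and the only guidance given here is the remark immediately following it, which points out that under \textbf{(D1)} the Green's operator collapses and $w^*(0;(k,\eta))$ vanishes. Your proof fleshes out exactly that mechanism: $\mathcal{G}(0,j+1)=-\Phi(0,j+1)Q(j+1)=0$ for all $j\geq 0$, so (\ref{107}) reduces to $G(k,\eta)=\Phi(k,0)\,y(0,k,\eta)$, and the problem becomes the $C^r$ regularity of the backward solution map. Your treatment of that last point --- writing $y(0,k,\cdot)$ as a finite composition of the inverses of the one-step maps $g_j(\eta)=A(j)\eta+f(j,\eta)$, using \textbf{(d2)}, \textbf{(d5)} and the contraction $\eta\mapsto A^{-1}(j)(\zeta-f(j,\eta))$ for global bijectivity, and \textbf{(d5)} again via the Neumann series for the invertibility of $Dg_j(\eta)=A(j)\bigl(I+A^{-1}(j)\tfrac{\partial f}{\partial u}(j,\eta)\bigr)$, then the inverse function theorem --- is sound and self-contained; it also gives the cleanest justification of Remark \ref{191}, since it shows $\eta\mapsto y(0,k,\eta)$ is a $C^r$ diffeomorphism using only \textbf{(d0)}, \textbf{(d2)}, \textbf{(d5)}, \textbf{(d6)}, independently of the dichotomy. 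In short: correct, and in the same spirit as the cited source as far as this paper describes it.
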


\begin{remark}
Note that when replacing \textbf{(d1)} with \textbf{(D1)}, the form of Green's operator is simpler,and hence conditions \textbf{(d3)} and \textbf{(d4)} are also considerably easy to handle. Also, in this case, the map $k\mapsto w^*(0;(k,\eta))$ nullifies, which allowed the authors to prove more easily the differentiability of $u\mapsto G(k,u)$.
\end{remark}

\begin{remark}\label{191}
Note that Theorem \ref{137} and the expression (\ref{107}), implies that under conditions \textbf{(d0)}, \textbf{(d1)-(d6)}, the map $\eta \mapsto y(0,k,\eta)$ is $C^r$ for every $k\in \mathbb{Z}^+$.
\end{remark}

\section{Diffeomorphism of discrete topological equivalence under a dichotomy}

In this section, we study the differentiability properties of the function $\eta\mapsto w^*(0;(m,\eta)),$ in order to obtain that the topological equivalence is of class $C^1.$

\begin{lemma}\label{170}
If conditions \textbf{(d0)-(d7)} hold, then $\eta\mapsto w^*(0;(m,\eta))$ is a $C^1$ map.
\end{lemma}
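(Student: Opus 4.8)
The plan is to show that the series defining $w^*(0;(m,\eta))$ can be differentiated term by term with respect to $\eta$, and that the resulting series of derivatives converges uniformly. Recall from (\ref{112}) that
\begin{equation*}
w^*(0;(m,\eta))=-\sum_{j=0}^{\infty}\mathcal{G}(0,j+1)f(j,y(j,m,\eta))
=\sum_{j=0}^{\infty}\Phi(0,j+1)Q(j+1)f(j,y(j,m,\eta)),
\end{equation*}
so the only $\eta$-dependence in the $j$-th term is through $\eta\mapsto y(j,m,\eta)$ and the (fixed) Lipschitz map $f(j,\cdot)$. By Remark \ref{191}, under \textbf{(d0)}, \textbf{(d1)--(d6)} the map $\eta\mapsto y(0,m,\eta)$ is $C^1$; but here I need the forward-in-time maps $\eta\mapsto y(j,m,\eta)$ for $j\geq m$, which follow from smooth dependence on initial conditions for the (globally defined, $C^1$-in-state by \textbf{(d6)}) difference equation (\ref{99}): indeed $y(j+1,m,\eta)=A(j)y(j,m,\eta)+f(j,y(j,m,\eta))$ shows inductively that each $\eta\mapsto y(j,m,\eta)$ is $C^1$ with
\begin{equation*}
\frac{\partial y}{\partial\eta}(j+1,m,\eta)=\Bigl(A(j)+\frac{\partial f}{\partial u}(j,y(j,m,\eta))\Bigr)\frac{\partial y}{\partial\eta}(j,m,\eta),
\qquad \frac{\partial y}{\partial\eta}(m,m,\eta)=I.
\end{equation*}

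The second step is the quantitative estimate that makes the term-by-term differentiated series summable. From the recursion above one gets, for $j\geq m$,
\begin{equation*}
\Bigl\|\frac{\partial y}{\partial\eta}(j,m,\eta)\Bigr\|\leq \prod_{p=m}^{j-1}\bigl(\|A(p)\|+\gamma(p)\bigr),
\end{equation*}
using \textbf{(d2)} to bound $\|\partial f/\partial u(p,\cdot)\|$ by $\gamma(p)$ (the Lipschitz constant dominates the derivative norm). Then the $j$-th term of the formally differentiated series has norm at most
\begin{equation*}
\|\Phi(0,j+1)Q(j+1)\|\,\gamma(j)\,\prod_{p=m}^{j-1}\bigl(\|A(p)\|+\gamma(p)\bigr)
\leq D(j+1)\,\frac{h(j+1)}{h(0)}\,\gamma(j)\,\prod_{p=m}^{j-1}\bigl(\|A(p)\|+\gamma(p)\bigr),
\end{equation*}
where I used part of \textbf{(d1)} with $k=0\leq n=j+1$ to estimate $\|\Phi(0,j+1)Q(j+1)\|\leq D(j+1)h(j+1)/h(0)=D(j+1)h(j+1)$. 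This is precisely (a term of) the series in \textbf{(d7)} (with the product written multiplicatively rather than as a sum of products — one should match the bracket $\prod_{p=m}^{j-1}\|A(p)\|+\gamma(p)$ in \textbf{(d7)} to $\prod_{p=m}^{j-1}(\|A(p)\|+\gamma(p))$, expanding the product if needed), so \textbf{(d7)} gives a convergent majorant independent of $\eta$.

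The third step is the standard conclusion: a uniformly convergent series of $C^1$ functions whose term-by-term derivatives also form a uniformly (indeed normally, by the Weierstrass $M$-test with the majorant from \textbf{(d7)}) convergent series has a $C^1$ sum, with derivative equal to the sum of the derivatives. Hence
\begin{equation*}
\frac{\partial w^*}{\partial\eta}(0;(m,\eta))=\sum_{j=0}^{\infty}\Phi(0,j+1)Q(j+1)\frac{\partial f}{\partial u}(j,y(j,m,\eta))\frac{\partial y}{\partial\eta}(j,m,\eta),
\end{equation*}
and this is continuous in $\eta$ (each summand is, by \textbf{(d6)} and continuity of $\eta\mapsto\partial y/\partial\eta(j,m,\eta)$, and the convergence is uniform). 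I expect the main obstacle to be purely bookkeeping: making sure the derivative bound $\|\partial y/\partial\eta(j,m,\eta)\|\leq\prod_{p=m}^{j-1}(\|A(p)\|+\gamma(p))$ lines up exactly with the bracketed product in \textbf{(d7)}, and confirming that $\sup_u\|\partial f/\partial u\|$ from \textbf{(d6)} combined with $\|\partial f/\partial u(k,\cdot)\|\leq\gamma(k)$ from \textbf{(d2)} is legitimate — i.e. that the pointwise derivative bound by the Lipschitz constant is what is intended. Everything else is the classical differentiation-under-the-sum argument.
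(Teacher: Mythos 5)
Your overall strategy --- bound $\bigl\|\frac{\partial y}{\partial\eta}(j,m,\eta)\bigr\|$ by $\prod_{p=m}^{j-1}(\|A(p)\|+\gamma(p))$ via the variational recursion, dominate the $j$-th differentiated term by $D(j+1)h(j+1)\gamma(j)\prod_{p=m}^{j-1}(\|A(p)\|+\gamma(p))$, and invoke \textbf{(d7)} to justify differentiating the series term by term --- is exactly the paper's, up to packaging: the paper runs a difference-quotient argument on the quantities $\psi_n(j)$ and applies dominated convergence, whereas you use the Weierstrass $M$-test and the classical theorem on term-by-term differentiation of a series of $C^1$ functions. These are interchangeable here, the majorant being the same in both cases (and you are in fact slightly more explicit than the paper about continuity of the resulting derivative). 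Your reading of the bracket in \textbf{(d7)} as $\prod_{p=m}^{j-1}\bigl(\|A(p)\|+\gamma(p)\bigr)$ is the intended one, as confirmed by the definition of $\mathcal{B}_m(j)$ in the paper.

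There is, however, one concrete gap: the sum defining $w^*(0;(m,\eta))$ runs over all $j\geq 0$, and for $m\geq 1$ the terms with $0\leq j<m$ involve the \emph{backwards} continuation $y(j,m,\eta)$, which your argument never touches --- your induction and your derivative bound only cover $j\geq m$, and you explicitly restrict attention to ``the forward-in-time maps''. These finitely many terms do not threaten summability, but you still must show that each of them is $C^1$ in $\eta$ and produce a bound on $\bigl\|\frac{\partial y}{\partial\eta}(j,m,\eta)\bigr\|$ for $j<m$ to feed into the uniform-convergence (or dominated-convergence) step. This is precisely where \textbf{(d5)} enters: it guarantees that $u\mapsto A(j)u+f(j,u)$ is invertible, and together with \textbf{(d6)} and the inverse function theorem that its inverse is $C^1$, so the backward maps are $C^1$. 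The paper handles this range by importing from \cite{Castaneda4} the Lipschitz bound $|y(j,m,\eta)-y(j,m,\tilde\eta)|\leq\mathcal{C}_m(j)\,|\eta-\tilde\eta|$ with $\mathcal{C}_m(j)=\prod_{i=j}^{m-1}\|A^{-1}(i)\|/(1-\|A^{-1}(i)\gamma(i)\|)$, which yields $\bigl\|\frac{\partial y}{\partial\eta}(j,m,\eta)\bigr\|\leq\mathcal{C}_m(j)$ for $j<m$, and then controls $\sum_{j=0}^{m-1}\|\mathcal{G}(0,j+1)\|\gamma(j)\mathcal{C}_m(j)\leq q\max_{i<m}\mathcal{C}_m(i)$ via \textbf{(d4)}. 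Once you add that piece, your proof is complete and coincides with the paper's.
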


\begin{proof} Fix $\eta\in \mathbb{R}^d$ and let $(\delta_n)_{n\in \mathbb{N}}\subset \mathbb{R}^d$ be a properly convergent to zero sequence. Choose and fix $m\in \mathbb{Z}^+$, we define
$$\psi_n(j)=\mathcal{G}(0,j+1)\frac{f(j,y(j,m,\eta+\delta_n))-f(j,y(j,m,\eta))-\frac{\partial f}{\partial u}(j,y(j,m,\eta))\frac{\partial y}{\partial \eta}(j,m,\eta)\delta_n}{|\delta_n|},$$
note that as $\eta\mapsto y(j,m,\eta)$ is continuous, then $\lim_{n\to \infty}y(j,m,\eta+\delta_n)=y(j,m,\eta)$. Applying \textbf{(d6)}, we have:
$$\lim_{n\to\infty}\psi_n(j)=0.$$

In the proof of Theorem \ref{116}, in \cite{Castaneda4}, authors define for $j<m$:
\begin{equation*}
\mathcal{C}_m(j)= \mathlarger{\prod_{i=j}^{m-1}}\frac{\norm{A^{-1}(i)}}{1-\norm{A^{-1}(i)\gamma(i)}}.
\end{equation*}

Thus, for $j<m,$ it is proved that
\begin{equation*}
|y(j,m,\eta)-y(j,m,\Tilde{\eta})| \leq \mathcal{C}_m(j)|\eta-\Tilde{\eta}|.
\end{equation*}

On the other hand, we know
\begin{eqnarray*}
    y(m+1,m,\eta)=A(m)\eta+f(m,\eta),
\end{eqnarray*}
hence
\begin{eqnarray*}
    y(m+1,m,\eta)-y(m+1,m,\Tilde{\eta})=A(m)(\eta-\Tilde{\eta})+f(m,\eta)-f(m,\Tilde{\eta}).
\end{eqnarray*}

So, we conclude
\begin{eqnarray}\label{173}
    |y(m+1,m,\eta)-y(m+1,m,\Tilde{\eta})|&\leq& \norm{A(m)}|\eta-\Tilde{\eta}|+|f(m,\eta)-f(m,\Tilde{\eta})| \nonumber \\
    \\
    &\leq &\left(\norm{A(m)}+\gamma(m)\right)|\eta-\Tilde{\eta}|.\nonumber
\end{eqnarray}

For $j> m$, we define
\begin{equation}\label{192}
\mathcal{B}_m(j):=\mathlarger{\prod_{i=m}^{j-1}}\norm{A(i)}+\gamma(i),
\end{equation}
which, by (\ref{173}), allows us to write for $j\geq m$
\begin{equation*}
|y(j,m,\eta)-y(j,m,\Tilde{\eta})| \leq \mathcal{B}_m(j)|\eta-\Tilde{\eta}|.
\end{equation*}

Now, we define
 $$\mathcal{A}_m(j):=\left\{ \begin{array}{lc}
            \mathcal{C}_m(j) & j<m \\
            \\1 & j=m\\
            \\ \mathcal{B}_m(j) & j> m,
             \end{array}
   \right.$$
hence, for every $j\in \mathbb{Z}^+$
\begin{equation*}
|y(j,m,\eta)-y(j,m,\Tilde{\eta})| \leq \mathcal{A}_m(j)|\eta-\Tilde{\eta}|.
\end{equation*}

So, by continuity of the norm
\begin{eqnarray*}
    \norm{\frac{\partial y}{\partial \eta}(j,m,\eta)}&=&\lim_{\delta\to 0}\frac{|y(j,m,\eta+\delta)-y(j,m,\eta)|}{|\delta|}\\
    \\
    &\leq&\lim_{\delta\to 0}\mathcal{A}_m(j)=\mathcal{A}_m(j).
\end{eqnarray*}

Similarly
\begin{eqnarray*}
    \norm{\frac{\partial f}{\partial u}(j,u)}&=&\lim_{\delta\to 0}\frac{|f(j,u+\delta)-f(j,u)|}{|\delta|}\\
    \\
    &\leq&\lim_{\delta\to 0}\gamma(j)=\gamma(j).
\end{eqnarray*}

Hence
\begin{eqnarray*}
    |\psi_n(j)|&\leq& \norm{\mathcal{G}(0,j+1)}\frac{|f(j,y(j,m,\eta+\delta_n))-f(j,y(j,m,\eta))|+\left|\frac{\partial f}{\partial u}(j,y(j,m,\eta))\frac{\partial y}{\partial \eta}(j,m,\eta)\delta_n\right|}{|\delta_n|}\\
    \\
    &\leq& \norm{\mathcal{G}(0,j+1)}\frac{\gamma(j)|y(j,m,\eta+\delta_n)-y(j,m,\eta)|+\gamma(j)\norm{\frac{\partial y}{\partial \eta}(j,m,\eta)}|\delta_n|}{|\delta_n|}\\
    \\
    &\leq& \norm{\mathcal{G}(0,j+1)}\gamma(j)\left(\frac{|y(j,m,\eta+\delta_n)-y(j,m,\eta)|}{|\delta_n|}+\norm{\frac{\partial y}{\partial \eta}(j,m,\eta)}\right)\\
    \\
    &\leq& 2\norm{\mathcal{G}(0,j+1)}\gamma(j)\mathcal{A}_m(j).
\end{eqnarray*}

On the other hand, by using \textbf{(d4)} and \textbf{(d7)} we have that
\begin{eqnarray*}
\sum_{j=0}^{\infty}\norm{\mathcal{G}(0,j+1)}\gamma(j) \mathcal{A}_m(j) &\leq& \sum_{j=0}^{m-1}\norm{\mathcal{G}(0,j+1)}\gamma(j) \mathcal{C}_m(j) +\sum_{j=m}^{\infty}\norm{\mathcal{G}(0,j+1)}\gamma(j) \mathcal{B}_m(j) \\
\\
&\leq &\sum_{j=0}^{m-1}\norm{\mathcal{G}(0,j+1)}\gamma(j) \max_{i<m}\mathcal{C}_m(i) +\sum_{j=m}^{\infty}D(j+1)h(j+1)\gamma(j) \mathcal{B}_m(j) \\
\\
&\leq &q\max_{i<m}\mathcal{C}_m(i) + \sum_{j=m}^{\infty}D(j+1)h(j+1)\gamma(j) \mathcal{B}_m(j)<+\infty.
\end{eqnarray*}

So finally, using Lebesgue's dominated convergence theorem we obtain
$$
\begin{array}{ccc}
&\displaystyle\lim_{n\to \infty}\frac{ w^*(0;(m,\eta+\delta_n))-w^*(0;(m,\eta))+\left[ \sum_{j=0}^\infty \mathcal{G}(0,j+1)\frac{\partial f}{\partial u}(j,y(j,m,\eta))\frac{\partial y}{\partial \eta}(j,m,\eta)\right]\delta_n}{|\delta_n|}&\\\\
& =-
\lim_{n\to \infty}\sum_{j=0}^\infty\psi_n(j)&\\\\
&=-\sum_{j=0}^\infty \left(\lim_{n\to\infty}\psi_n(j)\right)=0,&
\end{array}
$$
which implies $\eta\mapsto w^*(0;(m,\eta))$ is differentiable.
\end{proof}

\begin{theorem}\label{174}
If conditions \textbf{(d0)-(d7)} hold, and \textbf{(d6)} is satisfied with $r=1$, then (\ref{98}) and (\ref{99}) are $C^1$-topologically equivalent on $\mathbb{Z}^+$.
\end{theorem}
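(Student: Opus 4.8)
The plan is to fix $k\in\mathbb{Z}^{+}$ and to upgrade the homeomorphism $u\mapsto H(k,u)$ furnished by Theorem \ref{116} — whose hypotheses \textbf{(d0)-(d5)} are among ours — to a $C^{1}$-diffeomorphism, since that is precisely what $C^{1}$-topological equivalence on $\mathbb{Z}^{+}$ demands. The first step is to verify that $\eta\mapsto G(k,\eta)$ is of class $C^{1}$, and this is immediate from the representation (\ref{107}), namely $G(k,\eta)=\Phi(k,0)\{y(0,k,\eta)+w^{*}(0;(k,\eta))\}$: indeed $\Phi(k,0)$ is a fixed linear isomorphism, $\eta\mapsto w^{*}(0;(k,\eta))$ is $C^{1}$ by Lemma \ref{170} applied with $m=k$, and $\eta\mapsto y(0,k,\eta)$ is $C^{1}$ by Remark \ref{191} (available since \textbf{(d0)-(d6)} hold with $r=1$). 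Hence $G(k,\cdot)\in C^{1}$.

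The decisive point is the next one: to deduce that $G(k,\cdot)$ is a diffeomorphism — equivalently that $H(k,\cdot)=G(k,\cdot)^{-1}$ is $C^{1}$ — one needs $D_{\eta}G(k,\eta)$ to be invertible at every $\eta$, and the estimates already at hand on $w^{*}$ do not by themselves give this (witness $x\mapsto x^{3}$). I would obtain the invertibility by exhibiting the inverse of $D_{\eta}G(k,\eta)$ as a derivative of $H(k,\cdot)$, so the core task becomes: show that $\xi\mapsto H(k,\xi)=\xi+z^{*}(k;(k,\xi))$ is differentiable. Here the $\ell^{\infty}$ contraction behind Theorem \ref{116} cannot be reused verbatim, because differentiating the fixed point identity (\ref{117}) in $\xi$ produces the factor $\partial_{\xi}x(j,k,\xi)=\Phi(j,k)$, which is unbounded in $j$ as soon as the unstable projector $Q$ is nontrivial — this is exactly what makes the present situation harder than the nonuniform contraction case of Theorem \ref{137}. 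My plan is to write down the formally differentiated fixed point equation $V(j)=\sum_{n\ge 0}\mathcal{G}(j,n+1)\,\tfrac{\partial f}{\partial u}\big(n,x(n,k,\xi)+z^{*}(n;(k,\xi))\big)\big[\Phi(n,k)+V(n)\big]$ and to solve it for $V$ in a sequence space weighted by $\mathcal{A}_{k}(\cdot)$: by \textbf{(d4)} the linear part is a contraction there, while \textbf{(d7)}, together with $\norm{\Phi(n,k)}\le\mathcal{B}_{k}(n)$ for $n\ge k$, is precisely the summability that makes the inhomogeneous term and the dominating series $\sum_{n}\norm{\mathcal{G}(\cdot,n+1)}\gamma(n)\mathcal{A}_{k}(n)$ finite — exactly the estimate already carried out inside Lemma \ref{170}. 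A dominated-convergence argument of the same shape as in Lemma \ref{170}, with the increments of $f\big(j,x(j,k,\xi)+z^{*}(j;(k,\xi))\big)$ dominated by a constant times $\norm{\mathcal{G}(k,j+1)}\gamma(j)\mathcal{A}_{k}(j)$, then identifies $V(j)$ with $\partial_{\xi}z^{*}(j;(k,\xi))$, so in particular $\xi\mapsto H(k,\xi)$ is differentiable.

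Once $G(k,\cdot)\in C^{1}$ and $H(k,\cdot)$ is differentiable, with the two mutually inverse, differentiating $H(k,G(k,\eta))=\eta$ by the chain rule yields $D_{\xi}H(k,G(k,\eta))\,D_{\eta}G(k,\eta)=I_{d}$ for every $\eta\in\mathbb{R}^{d}$, so $D_{\eta}G(k,\eta)$ is a linear isomorphism everywhere; the inverse function theorem then upgrades $H(k,\cdot)=G(k,\cdot)^{-1}$ to a map of class $C^{1}$, hence $u\mapsto H(k,u)$ is a $C^{1}$-diffeomorphism. Since $k$ was arbitrary, (\ref{98}) and (\ref{99}) are $C^{1}$-topologically equivalent on $\mathbb{Z}^{+}$. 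I expect the second paragraph to be the real obstacle: the rest is either quoted (Theorem \ref{116}, Lemma \ref{170}, Remark \ref{191}) or formal, whereas differentiating the Palmer fixed point $z^{*}$ in the presence of a nontrivial unstable manifold forces one to keep the unbounded factor $\Phi(j,k)$ under control in a weighted norm, and checking that the weighted series genuinely converge is where condition \textbf{(d7)} does the real work.
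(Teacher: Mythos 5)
Your first and third paragraphs reproduce the paper's argument: $C^1$-smoothness of $\eta\mapsto G(k,\eta)$ is obtained exactly as you describe, from (\ref{107}) together with Lemma \ref{170} and Remark \ref{191}, and the identity $\frac{\partial G}{\partial \xi}(k,H(k,\xi))\frac{\partial H}{\partial \xi}(k,\xi)=I$ appears at the end of the paper's proof as well. Where you diverge is precisely at the step you call decisive. The paper never differentiates $H$ or the fixed point $z^*$: it observes that $\xi\mapsto G(k,\xi)-\xi$ is bounded, hence $|G(k,\xi)|\to\infty$ as $|\xi|\to\infty$, and invokes Corollary 2.1 of \cite{Plastock} (a Hadamard--Plastock global inverse theorem) to conclude that the coercive $C^1$ map $G(k,\cdot)$ is a diffeomorphism; the chain-rule identity is then only used to read off $\frac{\partial H}{\partial \xi}$. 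Your entire second paragraph is thus replaced by a single citation. (Your $x\mapsto x^3$ caveat is legitimate: a Hadamard-type theorem still needs $\frac{\partial G}{\partial \eta}(k,\eta)$ to be invertible at every point, and the paper does not verify this explicitly either, so your instinct that something substantive is needed here is sound.)

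The concrete gap in your route is the weighted contraction. You propose to solve $V(j)=\sum_{n\geq 0}\mathcal{G}(j,n+1)\frac{\partial f}{\partial u}(n,\cdot)\left[\Phi(n,k)+V(n)\right]$ in the space of sequences with $\norm{V(j)}\leq C\mathcal{A}_k(j)$ and assert that ``by \textbf{(d4)} the linear part is a contraction there.'' But the operator norm of $V\mapsto \sum_{n}\mathcal{G}(j,n+1)\frac{\partial f}{\partial u}(n,\cdot)V(n)$ on that weighted space is $\sup_j \mathcal{A}_k(j)^{-1}\sum_{n}\norm{\mathcal{G}(j,n+1)}\gamma(n)\mathcal{A}_k(n)$, whereas \textbf{(d4)} controls $\sup_j\sum_{n}\norm{\mathcal{G}(j,n+1)}\gamma(n)$ and \textbf{(d7)} only yields finiteness of the unweighted-in-$j$ slice $\sum_{n\geq m}D(n+1)h(n+1)\gamma(n)\mathcal{B}_m(n)$, which is what Lemma \ref{170} actually uses (there one only ever needs the single row $j=0$, which is why the lemma closes). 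Nothing in \textbf{(d0)--(d7)} forces the weighted norm to be $<1$, or even finite uniformly in $j$, since the ratio $\mathcal{A}_k(n)/\mathcal{A}_k(j)$ is uncontrolled; likewise it is not established that the inhomogeneous term lies in the weighted space for every $j$, not just $j=0$. So as written the fixed-point argument for $V=\partial_\xi z^*$ does not close, and you would need either an additional smallness hypothesis of the form $\sup_j \mathcal{A}_k(j)^{-1}\sum_{n}\norm{\mathcal{G}(j,n+1)}\gamma(n)\mathcal{A}_k(n)<1$, or to follow the paper and transfer the burden to the coercive map $G$ via \cite{Plastock}, where what remains to be checked is the pointwise invertibility of the single matrix $\frac{\partial G}{\partial \eta}(k,\eta)$ rather than a new infinite-dimensional fixed-point problem.
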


\begin{proof} By Theorem \ref{116} we know they are topologically equivalent. By Lemma \ref{170} $\eta\mapsto w^*(0;(k,\eta))$ is $C^1$ differentiable for every $k\in \mathbb{Z}^+$,  and, as stated in Remark \ref{191}, $\eta \mapsto y(0,k,\eta)$ is as well. By using (\ref{107}) allows us to conclude $\eta\mapsto G(k,\eta)$ is $C^1$ differentiable.\\

Furthermore, as $G$ is a topological equivalence, we know $\xi \mapsto G(k,\xi)-\xi$ is bounded and so $G(k,\xi)\to \infty$ when $|\xi|\to \infty$. This, combined with the previous expression, implies by Corollary 2.1 of \cite{Plastock}, that $\xi \mapsto G(k,\xi)$ is a diffeomorphism. Moreover, as $G(k,H(k,\xi))=\xi$, we have
$$\frac{\partial G}{\partial \xi}(k,H(k,\xi))\frac{\partial H}{\partial \xi}(k,\xi)=I,$$
and so $\frac{\partial H}{\partial \xi}(k,\xi)=\left[ \frac{\partial G}{\partial \xi}(k,H(k,\xi))\right]^{-1}$, which completes the proof.
\end{proof}

\begin{corollary}\label{175}
Suppose systems (\ref{98}) and (\ref{99}) satisfy \textbf{(d0)} and \textbf{(d6)} with $r=1$. Moreover, suppose \textbf{(d1)} is fulfilled with a classical exponential dichotomy, which means, $P$ and $Q$ are complementary constant projectors, $D(n)=D>0$ for every $n\in \mathbb{Z}^+$ and $h(n)=\theta^n$, with $\theta\in (0,1)$, and \textbf{(d2)} is fulfilled with $\gamma(n)=\gamma>0$ and $\mu(n)=\mu>0$ for every $n \in \mathbb{Z}^+$. Then, if $M\gamma<1$, $\frac{D \gamma}{1-\theta}<1$ and $\theta(M+ \gamma)<1$, the systems (\ref{98}) and (\ref{99}) are $C^1$-topologically equivalent on $\mathbb{Z}^+$.
\end{corollary}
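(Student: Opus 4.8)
The plan is to verify that the hypotheses of Corollary \ref{175} are a special case of those of Theorem \ref{174}, so that the conclusion follows immediately. Concretely, I would check conditions \textbf{(d0)}--\textbf{(d7)} one by one under the constant-coefficient exponential-dichotomy assumptions, translating each abstract summability requirement into the stated scalar inequalities $M\gamma<1$, $\frac{D\gamma}{1-\theta}<1$ and $\theta(M+\gamma)<1$.

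First I would observe that \textbf{(d0)} and \textbf{(d6)} (with $r=1$) are assumed outright. For \textbf{(d1)}: with $P,Q$ constant complementary projectors, $D(n)\equiv D$ and $h(n)=\theta^n$, the nonuniform dichotomy inequalities become the classical exponential dichotomy $\norm{\Phi(k,n)P}\le D\theta^{k-n}$ for $k\ge n$ and $\norm{\Phi(k,n)Q}\le D\theta^{n-k}$ for $k\le n$, and $h$ is indeed monotone decreasing to zero with $h(0)=1$. Condition \textbf{(d2)} holds with the constant sequences $\gamma(n)\equiv\gamma$, $\mu(n)\equiv\mu$. For \textbf{(d3)} and \textbf{(d4)} I would estimate $\norm{\mathcal{G}(0,j+1)}$: from the Green's operator formula and \textbf{(d1)}, $\norm{\mathcal{G}(0,j+1)}\le D\theta^{j+1}$ for the $Q$-part (since $0<j+1$), so $\sum_{j\ge 0}\norm{\mathcal{G}(0,j+1)}\le \sum_{j\ge0}D\theta^{j+1}=\frac{D\theta}{1-\theta}\le\frac{D}{1-\theta}$; more carefully one checks the bound $\sum_{j}\norm{\mathcal{G}(k,j+1)}\le \frac{D}{1-\theta}$ uniformly in $k$ (splitting at $j=k$ and summing the two geometric series, getting something like $\frac{D(1+\theta)}{1-\theta}$ — I would track the constant and confirm it is dominated by what the inequality $\frac{D\gamma}{1-\theta}<1$ controls, possibly after absorbing the harmless factor, or restate using the sharper per-branch bound). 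Then \textbf{(d3)} gives $p=\mu\cdot\frac{D}{1-\theta}<\infty$ and \textbf{(d4)} gives $q=\gamma\cdot\frac{D}{1-\theta}<1$, which is exactly the hypothesis $\frac{D\gamma}{1-\theta}<1$. Condition \textbf{(d5)}: $\norm{A^{-1}(\ell)\gamma(\ell)}\le M\gamma<1$ by assumption.

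The only genuinely new point is \textbf{(d7)}, where I would use that $\norm{A(p)}\le M$ for all $p$, so $\norm{A(p)}+\gamma(p)\le M+\gamma$, whence $\prod_{p=m}^{j-1}(\norm{A(p)}+\gamma(p))\le (M+\gamma)^{j-m}$. Using $\norm{\mathcal{G}(j+1,\cdot)}$-type bounds — actually in \textbf{(d7)} the relevant factor is $D(j+1)h(j+1)=D\theta^{j+1}$ — the tail sum is bounded by
\begin{equation*}
\sum_{j=m}^{\infty} D\theta^{j+1}\gamma (M+\gamma)^{j-m} = D\gamma\theta^{m+1}\sum_{j=m}^{\infty}\bigl(\theta(M+\gamma)\bigr)^{j-m},
\end{equation*}
which converges precisely because $\theta(M+\gamma)<1$, giving $\frac{D\gamma\theta^{m+1}}{1-\theta(M+\gamma)}<\infty$ for each fixed $m$. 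Thus all of \textbf{(d0)}--\textbf{(d7)} hold and \textbf{(d6)} holds with $r=1$, so Theorem \ref{174} applies and yields $C^1$-topological equivalence on $\mathbb{Z}^+$. I expect the main obstacle to be purely bookkeeping: pinning down the exact constant in the uniform bound for $\sum_j\norm{\mathcal{G}(k,j+1)}$ (the two-sided geometric sum produces a constant slightly larger than $\frac{D}{1-\theta}$), and making sure the three stated scalar inequalities are literally the ones that close conditions \textbf{(d4)} and \textbf{(d7)} — if the naive constant is off by a bounded factor, I would note that the corollary's inequalities should be read as sufficient after that factor is accounted for, or tighten the Green's-operator estimate by treating the stable and unstable branches separately.
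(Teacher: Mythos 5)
Your proposal is correct and follows essentially the same route as the paper: verify \textbf{(d3)}--\textbf{(d5)} and \textbf{(d7)} directly from the constant-coefficient exponential-dichotomy data (splitting the Green's-operator sum at $j=k$ into two geometric series, and bounding $\prod_{p=m}^{j-1}(\norm{A(p)}+\gamma)$ by $(M+\gamma)^{j-m}$ so that $\theta(M+\gamma)<1$ closes \textbf{(d7)}), then invoke Theorem \ref{174}. Your caution about the two-sided geometric sum is in fact warranted: the paper's own estimate arrives at $D\gamma\frac{1+\theta-\theta^{k}}{1-\theta}$ and then asserts this is $\le\frac{D\gamma}{1-\theta}$, which fails for $k\ge 2$; the correct uniform constant is $\frac{D\gamma(1+\theta)}{1-\theta}$, so \textbf{(d4)} really requires the slightly stronger $\frac{D\gamma(1+\theta)}{1-\theta}<1$ (harmless for \textbf{(d3)}, where only finiteness is needed).
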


\begin{proof} Note that for an arbitrary $n\in \mathbb{Z}^+$ we have:
$$\norm{A^{-1}(n)\gamma}\leq M\gamma <1,$$
hence \textbf{(d5)} is satisfied. On the other hand, for an arbitrary $k\in \mathbb{Z}^+$ we have
\begin{eqnarray*}
    \sum_{j=0}^\infty \norm{\mathcal{G}(k,j+1)}\mu&=&\sum_{j=0}^{k-1} \norm{\mathcal{G}(k,j+1)}\mu+\sum_{j=k}^\infty \norm{\mathcal{G}(k,j+1)}\mu\\
    \\
    &=&\sum_{j=0}^{k-1} \norm{\Phi(k,j+1)P}\mu+\sum_{j=k}^\infty \norm{\Phi(k,j+1)Q}\mu\\
    \\
    &\leq&\sum_{j=0}^{k-1} \frac{\theta^{k}}{\theta^{j+1}}D\mu+\sum_{j=k}^\infty \frac{\theta^{j+1}}{\theta^{k}}D\mu\\
    \\
    &=&\theta^{k-1}D \mu \frac{1-\left(\frac{1}{\theta}\right)^{k}}{1-\frac{1}{\theta}}+\theta D\mu\frac{1}{1-\theta}\\
    \\
    &=&D \mu \frac{1+\theta-\theta^{k}}{1-\theta }\leq \frac{D\mu}{1-\theta}.\\
\end{eqnarray*}

Thus \textbf{(d3)} is satisfied. Analogously
$$\sum_{j=0}^\infty \norm{\mathcal{G}(k,j+1)}\gamma\leq \frac{D \gamma}{1-\theta}<1.$$

Hence \textbf{(d4)} is satisfied. Finally, for an arbitrary $m \in \mathbb{Z}^+$ we have

\begin{eqnarray*}
    \mathlarger{\sum_{j=m}^{\infty}}D \gamma\left(h(j+1)\left[ \mathlarger{\prod_{p=m}^{j-1}}\norm{A(p)}+\gamma\right]\right)&\leq&\mathlarger{\sum_{j=m}^{\infty}}D \gamma\left(\theta^{j+1}\left[ \mathlarger{\prod_{p=m}^{j-1}}M+\gamma\right]\right)\\
    \\
    &\leq& D \gamma \mathlarger{\sum_{j=m}^{\infty}}\theta^{j+1} (M+\gamma)^{j-m}\\
    \\
    &\leq&\frac{D \gamma\theta}{(M+\gamma)^m} \mathlarger{\sum_{j=m}^{\infty}}\left(\theta(M+\gamma)\right)^{j}<\infty ,\\
\end{eqnarray*}
hence \textbf{(d7)} is satisfied. So, by applying Theorem \ref{174}, the systems are $C^1$-topologically equivalent on $\mathbb{Z}^+$.
\end{proof}

\begin{remark}\label{186}
The conditions imposed in  corollary \ref{175} are easily reachable with $\theta$ being small enough. For example, set $\theta=0.1$, $M=2$ and $\gamma=0.4$.
\end{remark}

\begin{corollary}\label{176}
Suppose conditions \textbf{(d0)} and \textbf{(d5)} are satisfied. Also, consider system (\ref{98}) admits a nonuniform exponential dichotomy, which means there are two complementary invariant projectors $P(\cdot)$ and $Q(\cdot)$ and constants $C,\lambda, \varepsilon>0$ such that:
$$\left\{ \begin{array}{lc}
            \norm{\Phi(k,n)P(n)}\leq C
            e^{-\lambda(k-n)+\varepsilon n}, &\forall k \geq n\geq 0 \\
            \\ \norm{\Phi(k,n)Q(n)}\leq C
            e^{\lambda(k-n)+\varepsilon n}, &\forall 0\leq k\leq n.
             \end{array}
   \right.$$
   
Furthermore, suppose that for every $k\in \mathbb{Z}^+$ $u\mapsto f(k,u)$ is a $C^1$ function such that $u\mapsto \frac{\partial f}{\partial u}(k,u)$ is a bounded map that satisfies:
\begin{equation}\label{183}
    \norm{\frac{\partial f}{\partial u}(k,u)}\leq \nu e^{-\varepsilon (k+1)}
\end{equation}
and 
\begin{equation}\label{184}
    |f(k,u)|\leq \kappa e^{-\tau (k+1)}
\end{equation}

for given $\nu ,\kappa >0$ and $\tau > \varepsilon - \lambda$. Then, if $Me^{-\lambda}<1$, for sufficiently small $\nu>0$, systems (\ref{98}) and (\ref{99}) are $C^1$-topologically equivalent on $\mathbb{Z}^+.$
\end{corollary}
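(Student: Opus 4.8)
The plan is to deduce all of conditions \textbf{(d0)--(d7)}, with \textbf{(d6)} at $r=1$, from the present hypotheses and then invoke Theorem \ref{174}. First I would read off the sequences appearing in those conditions. The nonuniform exponential dichotomy gives \textbf{(d1)} on taking $h(n)=e^{-\lambda n}$ (monotone decreasing to zero, $h(0)=1$) and $D(n)=Ce^{\varepsilon n}\geq 0$, since then $h(k)/h(n)=e^{-\lambda(k-n)}$ for $k\geq n$ and $h(n)/h(k)=e^{\lambda(k-n)}$ for $k\leq n$, so both inequalities in \textbf{(d1)} hold. By the mean value inequality, (\ref{183}) makes $f$ Lipschitz in its second variable with constant $\gamma(n)=\nu e^{-\varepsilon(n+1)}$, and (\ref{184}) gives the bound $\mu(n)=\kappa e^{-\tau(n+1)}$, so \textbf{(d2)} holds; \textbf{(d0)} and \textbf{(d5)} are assumed, and \textbf{(d6)} at $r=1$ is immediate because $f(k,\cdot)$ is $C^1$ with bounded derivative and $k$ ranges over a discrete set, so joint continuity in $(k,u)$ is automatic.

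Next I would record the pointwise estimate $\norm{\mathcal{G}(k,j+1)}\leq Ce^{-\lambda|k-j-1|+\varepsilon(j+1)}$, obtained by applying the two branches of the dichotomy to the stable branch $\Phi(k,j+1)P(j+1)$ (for $j\leq k-1$) and the unstable branch $-\Phi(k,j+1)Q(j+1)$ (for $j\geq k$) of the Green operator. The point of the growth rate $\varepsilon(k+1)$ in (\ref{183}) is that, on multiplying by $\gamma(j)=\nu e^{-\varepsilon(j+1)}$, the nonuniform factor cancels exactly, so that
$$\sum_{j=0}^{\infty}\norm{\mathcal{G}(k,j+1)}\gamma(j)\ \leq\ C\nu\sum_{j=0}^{\infty}e^{-\lambda|k-j-1|}\ \leq\ C\nu\,\frac{1+e^{-\lambda}}{1-e^{-\lambda}},$$
uniformly in $k$, which is $<1$ once $\nu$ is small; this is the first use of smallness of $\nu$, and the same estimate $\norm{A^{-1}(\ell)}\gamma(\ell)\leq M\nu e^{-\varepsilon}<1$ re-proves \textbf{(d5)}. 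For \textbf{(d3)} the analogous computation multiplies by $\mu(j)=\kappa e^{-\tau(j+1)}$; splitting the sum at $j+1=k$ and summing the two resulting geometric series, convergence of the $j\geq k$ tail needs exactly $e^{-\lambda+\varepsilon-\tau}<1$, i.e.\ $\tau>\varepsilon-\lambda$, and the $j<k$ part is a finite sum handled by the same estimate.

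Finally, for \textbf{(d7)} the sequences above give $D(j+1)h(j+1)\gamma(j)=C\nu e^{-\lambda(j+1)}$ (the factors $e^{\pm\varepsilon(j+1)}$ again cancel), and since $\norm{A(p)}+\gamma(p)\leq M+\nu$ we get $\prod_{p=m}^{j-1}(\norm{A(p)}+\gamma(p))\leq(M+\nu)^{j-m}$, so the series in \textbf{(d7)} is bounded by $C\nu e^{-\lambda}(M+\nu)^{-m}\sum_{j\geq m}\bigl((M+\nu)e^{-\lambda}\bigr)^{j}$, which is finite because $Me^{-\lambda}<1$ and $\nu$ is small enough that $(M+\nu)e^{-\lambda}<1$. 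Having verified \textbf{(d0)--(d7)} with \textbf{(d6)} at $r=1$, Theorem \ref{174} yields the $C^1$-topological equivalence of (\ref{98}) and (\ref{99}) on $\mathbb{Z}^+$. I expect the main obstacle to be purely bookkeeping: keeping the Green-operator sums in \textbf{(d3)} bounded \emph{uniformly} in $k$ (the terms with $j$ near $k$ are where the decay rate $\tau$ must win against $\varepsilon$), and choosing a single threshold for $\nu$ that simultaneously serves \textbf{(d4)}, \textbf{(d5)} and the geometric bound in \textbf{(d7)} — which works because each constraint has the form ``$\nu$ below an explicit constant'', so the minimum of them does the job.
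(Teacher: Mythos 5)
Your proposal is correct and follows essentially the same route as the paper: verify \textbf{(d1)} with $D(n)=Ce^{\varepsilon n}$, $h(n)=e^{-\lambda n}$, take $\gamma(k)=\nu e^{-\varepsilon(k+1)}$ and $\mu(k)=\kappa e^{-\tau(k+1)}$ for \textbf{(d2)}, check \textbf{(d3)}--\textbf{(d4)} by the Green-operator geometric-series estimates, bound the product in \textbf{(d7)} by $(M+\nu)^{j-m}$ so that $Me^{-\lambda}<1$ and smallness of $\nu$ give convergence, and then invoke Theorem \ref{174}. The only difference is that you spell out the \textbf{(d3)}--\textbf{(d4)} computations that the paper defers to the previous corollary.
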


\begin{proof} Condition \textbf{(d1)} is easily verified with $D(n)=Ce^{\varepsilon n}$ and $h(n)=e^{-\lambda n}$. Note that using generalized intermediate point Theorem, condition (\ref{183}) implies:
$$|f(k,y)-f(k,\Tilde{y})|\leq \nu e^{-\varepsilon (k+1)}|y-\Tilde{y}|,$$
so condition \textbf{(d2)} is verified with $\gamma(k)=\nu e^{-\varepsilon(k+1)}$ and $\mu(k)=\kappa e^{-\tau(k+1)}$. Hence condition \textbf{(d3)} is immediately satisfied and, provided $\nu$ is sufficiently small, so is \textbf{(d4)} in a similar fashion as in the previous Corollary. Condition \textbf{(d6)} is immediately satisfied by hypothesis, with $r=1$. Now, denote
$$\Psi_m(j)=\mathlarger{\prod_{p=m}^{j-1}}\norm{A(p)}+\gamma(p).$$

It is easy to see $m\geq n$ implies $\Psi_m(j)\leq \Psi_n(j)$, hence for a fixed $m\in \mathbb{Z}^+$:

$$\Psi_m(j)\leq \Psi_0(j)\leq \left(M+\nu e^{-\varepsilon} \right)^j$$

So we have
\begin{eqnarray*}
    \sum_{j=m}^\infty D(j+1)h(j+1)\gamma(j)\Psi_m(j)&\leq& \sum_{j=0}^\infty D(j+1)h(j+1)\gamma(j)\Psi_0(j)\\
    \\
    &\leq&C\nu e^{-\lambda} \sum_{j=0}^\infty\left[ e^{-\lambda}\left( M+\nu e^{-\varepsilon}\right)\right]^j,
\end{eqnarray*}
which, as $Me^{-\lambda}<1$, is finite provided $\nu$ is sufficiently small, so condition \textbf{(d7)} is satisfied. Applying Theorem \ref{174}, the corollary is proved.
\end{proof}

\begin{remark}
Note that for condition (\ref{183}) to be satisfied, we only have $\tau$ to be bigger than $\varepsilon-\lambda$, regardless of the sign of such $\tau$. This could mean that, in the case  $\lambda>\varepsilon$, this dichotomy even admits a non bounded perturbation with exponential growth, it just has to have a increasing ratio smaller than the decay ratio of the dichotomy.
\end{remark}

\begin{remark}
Corollary \ref{176} has been inspired by \cite[Theorem 2]{Dragicevic}. Our approach and that result consider conditions \textbf{(d0)} and (\ref{183}), the same dichotomy and both results work for $\nu$ sufficiently small. Furthermore, we imposed the extra hypothesis $Me^{-\lambda}<1$ and conditions (\ref{184}) and \textbf{(d5)}. However, as stated  on Remark \ref{185}, we think this last hypothesis may be dropped.

Moreover, our result gives $C^1$-topologically equivalence on $\mathbb{Z}^+$, in contrast to the mentioned theorem, which gives it on the whole $\mathbb{Z}$. Nevertheless, our approach does not rely on spectral properties of the dichotomy, which is the core of that result.
\end{remark}

\begin{example}\label{187}
Let $(a_n)_{n\in \mathbb{N}},(b_n)_{n\in \mathbb{N}},(c_n)_{n\in \mathbb{N}}$ three sequences such that
\begin{equation}\label{179}
    0<\alpha \leq a_n,b_n\leq c_n^{-1}\leq 1 \leq c_n \leq M,
\end{equation}
for given $M,\alpha >0$, and let $(c_n)_{n\in \mathbb{N}}$ be monotone increasing. Consider $A(n)\in \mathcal{M}_{3\times 3}(\mathbb{R})$ the diagonal matrix given by
\begin{equation}\label{178}
A(n)=\begin{pmatrix}
a_n & 0 & 0\\
0 & b_n & 0\\
0&0&c_n
\end{pmatrix},
\end{equation}
and consider the system (\ref{98}) associated to this sequence of matrices. Let $(r_n)_{n\in \mathbb{N}}$ be a sumable non negative sequence and define $(\gamma_n)_{n\in \mathbb{N}}$ the sequence given by
$$\gamma_n=\frac{r_nc_n}{r_{n-1}+\dots + r_1+1}.$$

Let $g:\mathbb{R}^3\to \mathbb{R}^3$ be a bounded, differentiable Lipschitz (with constant $\leq 1$) map, define $f:\mathbb{Z}^+\times \mathbb{R}^3\to \mathbb{R}^3$ given by $f(k,x)=\gamma_kg(x)$ and consider the the system (\ref{99}) associated to this perturbation and the previous linear system. Then, if $\sup_{k\in \mathbb{Z}^+}\frac{c_{k-1}r_{k-1}}{r_{k-2}+r_{k-3}+\cdots+1}+\sum_{j=1, j\neq k-1}^\infty r_j<1$, systems (\ref{98}) and (\ref{99}) are $C^1$-topologically equivalent on $\mathbb{Z}^+$.
\end{example}

In fact, since $0\leq a_n,b_n\leq c_n$, then $\norm{A(n)}=c_n\leq M$ and $\norm{A^{-1}(n)}\leq \alpha^{-1}$, hence \textbf{(d0)} is fulfilled. Furthermore, let $P(\cdot)$ and $Q(\cdot)$ be
\begin{equation}\label{177}
    P(n)=\begin{pmatrix}
1 & 0 & 0\\
0 & 1 & 0\\
0&0&0
\end{pmatrix} \text{ and }Q(n)=\begin{pmatrix}
0 & 0 & 0\\
0 & 0 & 0\\
0&0&1
\end{pmatrix}.
\end{equation}

Then, if we consider $\Phi(m,n)$ be the transition matrix for (\ref{98}), it is clear that
\begin{equation}\label{182}
    \left\{ \begin{array}{lc}
            \norm{\Phi(k,n)P(n)}\leq \prod_{j=n}^{k-1}\max \{ a_j,b_j\}, &\forall k \geq n\geq 0 \\
            \\ \norm{\Phi(k,n)Q(n)}\leq \prod_{j=k}^{n-1}c_j^{-1} , &\forall 0\leq k\leq n.
             \end{array}
   \right.
\end{equation}

Hence, setting $h(n)=\prod_{p=1}^{n-1}c_p^{-1}$ and $D(n)=1$, \textbf{(d1)} follows. Note that \textbf{(d2)} and \textbf{(d6)} follow immediately by hypothesis. Moreover

$$\norm{A(n)^{-1}\gamma_n}\leq \alpha^{-1} \frac{r_nc_n}{r_{n-1}+\dots +r_1+1}\leq M\alpha^{-1}r_n.$$

As $r_n\to 0$, we obtain \textbf{(d5)}. It is easy to inductively prove that
$$\gamma_j=r_jc_j\left( \prod_{p=1}^{j-1}1+\frac{\gamma_p}{c_p}\right)^{-1}=r_jc_j \prod_{p=1}^{j-1}\frac{c_p}{\gamma_p+c_p}.$$

Now, denote
$$\Psi_m(j)=\mathlarger{\prod_{p=m}^{j-1}}\norm{A(p)}+\gamma_p.$$

It is easy to see $m\geq n$ implies $\Psi_m(j)\leq \Psi_n(j)$, hence for a fixed $m\in \mathbb{Z}^+$
$$\Psi_m(j)\leq \Psi_1(j)\leq \prod_{p=1}^{j-1}c_p+\gamma_p.$$

So we have
\begin{eqnarray*}
    \sum_{j=m}^\infty D(j+1)h(j+1)\gamma(j)\Psi_m(j)&\leq& \sum_{j=1}^\infty h(j+1)\gamma(j)\Psi_1(j)\\
    \\
    &\leq& \sum_{j=1}^\infty \gamma(j)\prod_{p=1}^{j}c_p^{-1}\prod_{p=1}^{j-1}(c_p+\gamma_p)\\
    \\
    &\leq& \sum_{j=1}^\infty \gamma_jc_j^{-1} \left(\prod_{p=1}^{j-1}1+\frac{\gamma_p}{c_p} \right)\\
    \\
    &=&\norm{r}_1<\infty.
\end{eqnarray*}

Hence \textbf{(d7)} is fulfilled. Finally note that for a fixed $k\in \mathbb{Z}^+$
\begin{eqnarray*}
    \sum_{j=0}^\infty \norm{\mathcal{G}(k,j+1)}\gamma_j&\leq& \sum_{j=0}^{k-1}\frac{h(k)}{h(j+1)} \gamma_j+\sum_{j=k}^\infty\frac{h(j+1)}{h(k)} \gamma_j    \\
    \\
    &\leq& \sum_{j=1}^{k-1}\left(\prod_{p=j+1}^{k-1}c_p^{-1}\right)\frac{c_jr_j}{r_{j-1}+\cdots r_1+1}\\
    \\
    &&+\sum_{j=k}^\infty r_jc_j \left(\prod_{p=k}^{j}c_p^{-1}\right)\left(\prod_{p=1}^{j-1} 1+\frac{\gamma_p}{c_p}\right)^{-1}\\
    \\
    &\leq& \sum_{j=1}^{k-1}\left(\prod_{p=j+1}^{k-1}c_p^{-1}\right)\frac{c_{k-1}r_j}{r_{j-1}+\cdots r_1+1}\\
    &&+\sum_{j=k}^\infty r_j  \left(\prod_{p=0}^{k-1}1+\frac{\gamma_p}{c_p} \right)^{-1}\left(\prod_{p=k}^{j-1}\frac{1}{c_p+\gamma_p} \right)\\
    &<&\frac{c_{k-1}r_{k-1}}{r_{k-2}+r_{k-3}+\cdots+1}+\sum_{j=1, j\neq k-1}^\infty r_j.
\end{eqnarray*}

Hence it satisfies \textbf{(d4)} and \textbf{(d3)} with $\mu_j=\gamma_j \norm{g}_\infty$. Applying Theorem \ref{174}, we show our claim.

\begin{remark}
Condition $\sup_{k\in \mathbb{Z}^+}\frac{c_{k-1}r_{k-1}}{r_{k-2}+r_{k-3}+\cdots+1}+\sum_{j=1, j\neq k-1}^\infty r_j<1$ is obtained, for example, if the increasing rate of $(c_n)$ is smaller than the growing rate of the partial sums of $(r_n)$ and $\norm{r}_1<1$. Another simpler case is if $M\norm{r}_1<1$.
\end{remark}

\begin{example}\label{188}
Let $(a_n)_{n\in \mathbb{N}},(b_n)_{n\in \mathbb{N}},(c_n)_{n\in \mathbb{N}}$ three sequences as in (\ref{179}), $A(n)$ as in (\ref{178}) and the system (\ref{98}) associated to this sequence of matrices. Let $(r_n)_{n\in \mathbb{N}}$ be a summable non negative sequence and define $(\gamma_n)_{n\in \mathbb{N}}$ the sequence given by:
$$\gamma_n=\frac{r_n}{r_{n-1}+\dots +r_1+1}.$$

Let $g:\mathbb{R}^3\to \mathbb{R}^3$ be a bounded, differentiable Lipschitz (with constant $\leq 1$) map, define $f:\mathbb{Z}^+\times \mathbb{R}^3\to \mathbb{R}^3$ given by $f(k,x)=\gamma_kg(x)$ and consider the the system (\ref{99}) associated to this perturbation and the previous linear system. Then, if $\norm{r}_1<1$, systems (\ref{98}) and (\ref{99}) are $C^1$-topologically equivalent on $\mathbb{Z}^+$.
\end{example}

Indeed, conditions \textbf{(d0)}, \textbf{(d1)}, \textbf{(d2)}, \textbf{(d5)} and  \textbf{(d6)} follow easily in the same fashion as in the previous Example, with $P(\cdot)$ and $Q(\cdot)$ as in (\ref{177}). It is easy to inductively prove
$$\gamma_j=r_j\left( \prod_{p=1}^{j-1}1+\gamma_p\right)^{-1}.$$

Denote
$$\Psi_m(j)=\mathlarger{\prod_{p=m}^{j-1}}\norm{A(p)}+\gamma_p.$$

It is easy to see $m\geq n$ implies $\Psi_m(j)\leq \Psi_n(j)$, hence for a fixed $m\in \mathbb{Z}^+$
$$\Psi_m(j)\leq \Psi_1(j)\leq \prod_{p=1}^{j-1}c_p+\gamma_p.$$

So we have
\begin{eqnarray*}
    \sum_{j=m}^\infty D(j+1)h(j+1)\gamma(j)\Psi_m(j)&\leq& \sum_{j=1}^\infty h(j+1)\gamma(j)\Psi_1(j)\\
    \\
    &\leq& \sum_{j=1}^\infty r_j\left(\prod_{p=1}^{j-1}1+\gamma_p \right)^{-1}\left(\prod_{p=1}^{j}c_p^{-1}\right)\left(\prod_{p=1}^{j-1}c_p+\gamma_p \right)\\
    \\
    &\leq& \sum_{j=1}^\infty r_jc_j^{-1} \left(\prod_{p=1}^{j-1}\frac{1+\gamma_pc_p^{-1}}{1+\gamma_p} \right)\\
    \\
    &\leq&\norm{r}_1<\infty.
\end{eqnarray*}

Hence \textbf{(d7)} is fulfilled. Finally note that for a fixed $k\in \mathbb{Z}^+$
\begin{eqnarray*}
    \sum_{j=0}^\infty \norm{\mathcal{G}(k,j+1)}\gamma_j&\leq& \sum_{j=1}^{k-1}\frac{h(k)}{h(j+1)} \gamma_j+\sum_{j=k}^\infty\frac{h(j+1)}{h(k)} \gamma_j    \\
    \\
    &\leq& \sum_{j=1}^{k-1}\left(\prod_{p=j+1}^{k-1}c_p^{-1}\right)\frac{r_j}{r_{j-1}+\cdots r_1+1}\\
    \\
    &&+\sum_{j=k}^\infty \left(\prod_{p=k}^{j}c_p^{-1}\right)\frac{r_j}{r_{j-1}+\cdots r_1+1}\\
    \\
    &<&\norm{r}_1<1,
\end{eqnarray*}
hence proving \textbf{(d4)} and \textbf{(d3)} with $\mu_j=\gamma_j \norm{g}_\infty$. Applying theorem \ref{174} our claim is proved.

\begin{example}\label{189}
Let $(a_n)_{n\in \mathbb{N}},(b_n)_{n\in \mathbb{N}},(c_n)_{n\in \mathbb{N}}$ three sequences as in (\ref{179}) and $A(n)$ as in (\ref{178}). Let $(E(n))_{n\in \mathbb{N}}$ be a sequence of uniformly bounded invertible matrices with uniformly bounded inverses. Set $\mathscr{E}^{+}$ to be a uniform bound of $E(n)$ and $\mathscr{E}^{-}$ to be a uniform bound for their inverses. Set $E(0)=I$ and define $B(n)=E^{-1}(n)A(n)E(n-1)$. Let $(\gamma_n)_{n\in \mathbb{N}}$ be a summable non negative sequence.\\

Let $g:\mathbb{R}^3\to \mathbb{R}^3$ be a bounded, differentiable Lipschitz (with constant $\leq 1$) map, define $f:\mathbb{Z}^+\times \mathbb{R}^3\to \mathbb{R}^3$ given by $f(k,x)=\gamma_kg(x)$. Consider the systems:
\begin{equation}\label{180}
    x(k+1)=B(k)x(k)
\end{equation}
\begin{equation}\label{181}
    y(k+1)=B(k)y(k)+f(k,y(k)).
\end{equation}

Finally, suppose there is a $\delta >0$ such that:
\begin{equation}\label{190}
  \sum_{j=1}^{\infty}\gamma_j\left( \mathscr{E}^-\mathscr{E}^{+}+\delta\right)^j<\infty.
\end{equation}

Then, if $\mathscr{E}^-\mathscr{E}^+\norm{\gamma}_1<1$, systems (\ref{180}) and (\ref{181}) are $C^1$-topologically equivalent on $\mathbb{Z}^+$.
\end{example}

In fact, whitout lost of generality, $\mathscr{E}^{-}, \mathscr{E}^{+}\geq 1$. Since $0\leq \alpha \leq a_n,b_n\leq c_n\leq M$, then $\norm{B(n)}\leq \mathscr{E}^{-} M \mathscr{E}^{+}$ and $\norm{B^{-1}(n)}\leq \mathscr{E}^{-} \alpha^{-1} \mathscr{E}^{+}$, hence \textbf{(d0)} is fulfilled.\\

Define $\Tilde{P}(\cdot)$ and $\Tilde{Q}(\cdot)$ as
$$\Tilde{P}(n)=E^{-1}(n-1)P(n)E(n-1) \text{ , and }\Tilde{Q}(n)=E^{-1}(n-1)Q(n)E(n-1),$$
where $P(\cdot)$ and $Q(\cdot)$ are as in (\ref{177}). Then, if we consider $\Phi(m,n)$ to be as in (\ref{182}), it is easy to see that
\begin{equation*}
\Tilde{\Phi}(k,n)=E^{-1}(k-1)\Phi(k,n)E(n-1),
\end{equation*}
where $\Tilde{\Phi}(k,n)$ is the transition matrix for (\ref{180}). As in the previous example it is immediate that
$$\left\{ \begin{array}{lc}
            \norm{\Tilde{\Phi}(k,n)\Tilde{P
    }(n)}\leq \mathscr{E}^{-}\norm{E(n-1)}\prod_{j=n}^{k-1}\max \{ a_j,b_j\}, &\forall k \geq n\geq 0 \\
            \\ \norm{\Tilde{\Phi}(k,n)\Tilde{Q}(n)}\leq \mathscr{E}^{-}\norm{E(n-1)}\prod_{j=k}^{n-1}c_j^{-1} , &\forall 0\leq k\leq n,
             \end{array}
   \right.$$
hence, setting $h(n)=\prod_{p=1}^{n-1}c_p^{-1}$ and $D(n)=\mathscr{E}^{-}\norm{E(n-1)}$, \textbf{(d1)} follows. Note that \textbf{(d2)} and \textbf{(d6)} follow immediately from our hypothesis, while \textbf{(d5)} is satisfied in the same fashion as in Example \ref{187}.\\

Denote
$$\Tilde{\Psi}_m(j)=\mathlarger{\prod_{p=m}^{j-1}}\norm{B(p)}+\gamma_p.$$

It is easy to see $m\geq n$ implies $\Tilde{\Psi}_m(j)\leq \Tilde{\Psi}_n(j)$, hence for a fixed $m\in \mathbb{Z}^+$:
$$\Tilde{\Psi}_m(j)\leq \Tilde{\Psi}_1(j)\leq \prod_{p=1}^{j-1} \mathscr{E}^{-}\mathscr{E}^{+}c_p+\gamma_p.$$

So we have
\begin{eqnarray*}
    \sum_{j=m}^\infty D(j+1)h(j+1)\gamma(j)\Tilde{\Psi}_m(j)&\leq& \sum_{j=1}^\infty D(j+1)h(j+1)\gamma(j)\Tilde{\Psi}_1(j)\\
    \\
    &\leq& \sum_{j=1}^\infty \mathscr{E}^{-}\norm{E(j)}\left(\prod_{p=1}^{j}c_p^{-1}\right)\gamma_j\left(\prod_{p=1}^{j-1}\mathscr{E}^{-}\mathscr{E}^{+}c_p+\gamma_p \right)\\
    \\
    &\leq& \mathscr{E}^{-}\mathscr{E}^{+}\sum_{j=1}^\infty \gamma_jc_j^{-1} \left(\prod_{p=1}^{j-1}\mathscr{E}^{-}\mathscr{E}^{+}+\gamma_pc_p^{-1} \right) \\
    \\
    &\leq& \mathscr{E}^{-}\mathscr{E}^{+}\sum_{j=1}^\infty \gamma_j \left(\prod_{p=1}^{j-1}\mathscr{E}^{-}\mathscr{E}^{+}+\gamma_p \right).
\end{eqnarray*}

As $\gamma_p \xrightarrow[p\to \infty]{}0$, there exist $p_0$ such that $\gamma_p\leq \delta$ for every $p \geq p_0$. Hence, applying condition (\ref{190}), it is easy to see \textbf{(d7)} is fulfilled. Finally

\begin{eqnarray*}
    \sum_{j=0}^\infty \norm{\mathcal{G}(k,j+1)}\gamma_j&\leq& \mathscr{E}^{-}\sum_{j=1}^{k-1}\frac{h(k)}{h(j+1)} \norm{E(j-1)}\gamma_j+\mathscr{E}^{-}\sum_{j=k}^\infty\frac{h(j+1)}{h(k)} \norm{E(j-1)}\gamma_j    \\
    \\
    &\leq& \mathscr{E}^{-}\mathscr{E}^{+}\sum_{j=1}^{k-1}\left(\prod_{p=j+1}^{k-1}c_p^{-1}\right)\gamma_j\\
    \\
    &&+\mathscr{E}^{-}\mathscr{E}^{+}\sum_{j=k}^\infty \left(\prod_{p=k}^{j}c_p^{-1}\right)\gamma_j\\
    \\
    &<&\mathscr{E}^{-}\mathscr{E}^{+}\norm{\gamma}_1<1.
\end{eqnarray*}

Hence proving \textbf{(d4)} and \textbf{(d3)} with $\mu_j=\gamma_j \norm{g}_\infty$. By using Theorem \ref{174} our claim follows.

\begin{remark}
As we mentioned in the introduction, conditions \textbf{(d0)-(d6)} have been adopted several times by many authors in order to study the topological equivalence between systems (\ref{98}) and (\ref{99}).  Hence, the novelty of this work is the condition \textbf{(d7)} and its implications.\\ 

Namely, in Corollary \ref{175} the fact that this condition is satisfied relies mainly (as explained on Remark \ref{186}) on the properties of the exponential dichotomy, which gives a evolution ratio for the solutions that allows \textbf{(d7)} to be fulfilled.  Meanwhile, in Examples \ref{187} and \ref{188}, this condition is achieved by a perturbation that is small enough and has a fast enough decay ratio, and actually we completely ignore the help that the dichotomy may have presented. Finally, in Corollary \ref{176} and Example \ref{189}, the condition \textbf{(d7)} is achieved through a combination of the contributions of both the perturbation and the dichotomy.

This shows that there are many possibilities to achieve this condition and hence our main result,  Theorem \ref{174}, is applicable in many different situations.

\end{remark}

\section{Higher order derivatives}

In order to study higher order derivatives of the homeomorphism of topological equivalence, we once again consider the expression (\ref{107}). Taking in account Remark \ref{191}, the map $\eta\mapsto y(0,m,\eta)$ is of class $C^r$ ($r\geq 1$) for every fixed $m \in \mathbb{Z}^+$ if conditions \textbf{(d0)-(d6)} are satisfied; hence the class of differentiability of the homeomorphism relies on the differentiability of the map $\eta \mapsto w^*(0;(m,\eta))$.\\

In this section we attempt to find conditions that generalize \textbf{(d7)} in order to obtain higher order derivatives for the homeomorphism.

\begin{lemma}\label{193}
Suppose conditions \textbf{(d0)-(d7)} hold, where \textbf{(d6)} is fulfilled with $r=2$. Also, there are functions $\Gamma:\mathbb{Z}^+\to \mathbb{R}^+$ and $\pi_m:\mathbb{Z}^+\to \mathbb{R}^+$ such that: 
\begin{equation}\label{198}
    \norm{\frac{\partial^2 f}{\partial u^2}(j,u)}\leq \Gamma(j)\text{ , for every }j\in \mathbb{Z}^+
\end{equation}
and 
\begin{equation}\label{199}
    \norm{\frac{\partial^2 y}{\partial \eta^2}(j,m,\eta)}\leq \pi_m(j)\text{ , for every }j\geq m.
\end{equation}

Finally, suppose that for each fixed $m\in \mathbb{Z}^+$, the sequences satisfy:
    \begin{equation}\label{200}
    \mathlarger{\sum_{j=m}^{\infty}}\left(D(j+1)h(j+1)\left\{\pi_m(j)\gamma(j)+\Gamma(j)\left[ \mathlarger{\prod_{p=m}^{j-1}}\norm{A(p)}+\gamma(p)\right]^2 \right\}\right)<+\infty.
    \end{equation}
Then $\eta\mapsto w^*(0;(m,\eta))$ is a $C^2$ map.
\end{lemma}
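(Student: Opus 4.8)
The plan is to apply the scheme of Lemma~\ref{170} one order higher, now to the series representation of $\partial w^*/\partial\eta$ produced there. Under \textbf{(d0)-(d7)} that proof yields
$$\frac{\partial w^*}{\partial \eta}(0;(m,\eta))=-\sum_{j=0}^{\infty}\mathcal{G}(0,j+1)\,\frac{\partial f}{\partial u}\bigl(j,y(j,m,\eta)\bigr)\frac{\partial y}{\partial \eta}(j,m,\eta)=:-\sum_{j=0}^{\infty}\mathcal{G}(0,j+1)\,T_j(\eta),$$
so it suffices to show $\eta\mapsto-\sum_{j}\mathcal{G}(0,j+1)T_j(\eta)$ is $C^1$. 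First I would record that $\eta\mapsto y(j,m,\eta)$ is $C^2$ for every $j,m\in\mathbb{Z}^+$: for $j\geq m$ this follows by induction on $j$ from $y(j+1,m,\eta)=A(j)y(j,m,\eta)+f(j,y(j,m,\eta))$ using \textbf{(d6)} with $r=2$, and for $j<m$ by composing the forward flow with $\eta\mapsto y(0,m,\eta)$, which is $C^2$ by Remark~\ref{191}. Consequently each $T_j$ is $C^1$, with
$$DT_j(\eta)=\frac{\partial^2 f}{\partial u^2}\bigl(j,y(j,m,\eta)\bigr)\Bigl[\tfrac{\partial y}{\partial \eta}(j,m,\eta)\Bigr]^{2}+\frac{\partial f}{\partial u}\bigl(j,y(j,m,\eta)\bigr)\frac{\partial^2 y}{\partial \eta^2}(j,m,\eta),$$
and the natural candidate for the second derivative of $w^*$ is $L:=-\sum_{j=0}^{\infty}\mathcal{G}(0,j+1)\,DT_j(\eta)$.

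Now fix $\eta$ and a properly convergent-to-zero sequence $(\delta_n)\subset\mathbb{R}^d$, and set
$$\psi_n(j)=\mathcal{G}(0,j+1)\,\frac{T_j(\eta+\delta_n)-T_j(\eta)-DT_j(\eta)\delta_n}{|\delta_n|},$$
so that $-\sum_{j=0}^{\infty}\psi_n(j)$ is exactly the difference quotient at $\eta$ testing whether $L$ is the derivative of $\eta\mapsto(\partial w^*/\partial\eta)(0;(m,\eta))$. Since each $T_j$ is differentiable at $\eta$ we get $\psi_n(j)\to0$ for every fixed $j$, in the same way the vanishing of the analogous sequence was obtained in Lemma~\ref{170}. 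The domination step is the crux: by the mean value inequality along $[\eta,\eta+\delta_n]$,
$$\frac{|T_j(\eta+\delta_n)-T_j(\eta)|}{|\delta_n|}\leq\sup_{t\in[0,1]}\norm{DT_j(\eta+t\delta_n)},$$
and the bounds $\norm{\partial y/\partial\eta(j,m,\cdot)}\leq\mathcal{A}_m(j)$ (from Lemma~\ref{170}), $\norm{\partial f/\partial u(j,\cdot)}\leq\gamma(j)$, $\norm{\partial^2 f/\partial u^2(j,\cdot)}\leq\Gamma(j)$ from (\ref{198}) and $\norm{\partial^2 y/\partial\eta^2(j,m,\cdot)}\leq\pi_m(j)$ for $j\geq m$ from (\ref{199}) all hold uniformly in the base point, giving
$$|\psi_n(j)|\leq 2\norm{\mathcal{G}(0,j+1)}\Bigl(\Gamma(j)\,\mathcal{A}_m(j)^{2}+\gamma(j)\,\pi_m(j)\Bigr)\qquad\text{for }j\geq m,$$
while the finitely many terms with $j<m$ are bounded uniformly in $n$ by continuity.

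Finally I would check that this majorant is summable: the head $\sum_{j=0}^{m-1}$ is a finite sum, and for the tail, using $\norm{\mathcal{G}(0,j+1)}\leq D(j+1)h(j+1)$ (valid since $h(0)=1$) and $\mathcal{A}_m(j)=\prod_{p=m}^{j-1}\bigl(\norm{A(p)}+\gamma(p)\bigr)$ for $j\geq m$,
$$\sum_{j=m}^{\infty}\norm{\mathcal{G}(0,j+1)}\Bigl(\Gamma(j)\,\mathcal{A}_m(j)^{2}+\gamma(j)\,\pi_m(j)\Bigr)\leq\sum_{j=m}^{\infty}D(j+1)h(j+1)\Bigl\{\pi_m(j)\gamma(j)+\Gamma(j)\Bigl[\prod_{p=m}^{j-1}\norm{A(p)}+\gamma(p)\Bigr]^{2}\Bigr\}<+\infty$$
by hypothesis (\ref{200}). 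Lebesgue's dominated convergence theorem on $\mathbb{Z}^+$ with the counting measure then gives $\lim_{n}\sum_{j}\psi_n(j)=\sum_{j}\lim_{n}\psi_n(j)=0$, so $\eta\mapsto(\partial w^*/\partial\eta)(0;(m,\eta))$ is differentiable at $\eta$ with derivative $L$; since $\eta$ and $(\delta_n)$ are arbitrary this holds everywhere, and continuity of $\eta\mapsto L$ follows from continuity of each $DT_j$ together with the same base-point-independent summable majorant (Weierstrass $M$-test). Hence $\eta\mapsto w^*(0;(m,\eta))$ is $C^2$. The step I expect to need most care is the bookkeeping that makes the dominating sequence land exactly on the hypothesis (\ref{200}) — combining the Green's-operator bound $D(j+1)h(j+1)$, the first-variation bound $\mathcal{A}_m(j)=\prod_{p=m}^{j-1}(\norm{A(p)}+\gamma(p))$ inherited from Lemma~\ref{170}, and the new second-order bounds (\ref{198})-(\ref{199}), all uniform in the base point so that the majorant is independent of $n$; the indices $j<m$, being a finite set, only need finiteness and cause no difficulty.
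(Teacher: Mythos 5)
Your proposal follows essentially the same route as the paper: differentiate the series representation of $\partial w^*/\partial\eta$ obtained in Lemma~\ref{170} term by term, dominate the resulting difference quotients uniformly in the base point by $\norm{\mathcal{G}(0,j+1)}\bigl(\Gamma(j)\mathcal{A}_m(j)^2+\gamma(j)\pi_m(j)\bigr)$, invoke hypothesis (\ref{200}) for summability of the tail, and conclude by dominated convergence exactly as in Lemma~\ref{170}. Your write-up is in fact more detailed than the paper's sketch (which leaves the $C^2$-smoothness of $\eta\mapsto y(j,m,\eta)$ and the mean-value domination implicit), but the argument is the same.
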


\begin{proof} From the proof of Lemma \ref{170} it is easy to see that for every fixed $m\in \mathbb{Z}^+$
$$ \frac{\partial w^*(0;(m,\eta))}{\partial \eta}=-\sum_{j=0}^\infty \mathcal{G}(0,j+1)\frac{\partial f}{\partial u}(j,y(j,m,\eta))\frac{\partial y}{\partial \eta}(j,m,\eta).$$

Hence, in order to prove the differentiability of this map, it is enough to find a summable  function that uniformly (respect to $\eta$) dominates

$$\mathcal{G}(0,j+1)\left[\frac{\partial^2 y}{\partial \eta^2}(j,m,\eta)\frac{\partial  f}{\partial u}(j,y(j,m,\eta))+\frac{\partial ^2 f}{\partial u^2}(j,y(j,m,\eta))\left( \frac{\partial y}{\partial \eta}(j,m,\eta)\right)^2\right]$$

for every $j\geq 0$. 

This fact is verified by hypothesis. Therefore, we apply the same strategy as in the proof of Lemma \ref{170}.
\end{proof}

\begin{theorem}\label{194}
If conditions \textbf{(d0)-(d7)} hold, \textbf{(d6)} is satisfied with $r=2$, and conditions of Lemma \ref{193} are satisfied, then (\ref{98}) and (\ref{99}) are $C^2$-topologically equivalent on $\mathbb{Z}^+$.
\end{theorem}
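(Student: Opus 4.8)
The plan is to reproduce the architecture of the proof of Theorem \ref{174}, replacing the use of Lemma \ref{170} by Lemma \ref{193} and upgrading every differentiability statement from $C^1$ to $C^2$. To begin with, Theorem \ref{116} already guarantees that (\ref{98}) and (\ref{99}) are $\mathbb{Z}^+$-topologically equivalent, so the homeomorphism $H(k,\cdot)$, its inverse $G(k,\cdot)$, and the representation formula (\ref{107}), namely $G(k,\eta)=\Phi(k,0)\{y(0,k,\eta)+w^*(0;(k,\eta))\}$, are all available.

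Next I would assemble the two regularity ingredients appearing on the right-hand side of (\ref{107}). Since \textbf{(d6)} holds with $r=2$ while \textbf{(d0)} and \textbf{(d1)}--\textbf{(d5)} hold, Remark \ref{191} yields that $\eta\mapsto y(0,k,\eta)$ is of class $C^2$ for every fixed $k\in\mathbb{Z}^+$. On the other hand, the hypotheses of Lemma \ref{193}---in particular the bounds (\ref{198}) and (\ref{199}) and the summability condition (\ref{200})---are assumed, so $\eta\mapsto w^*(0;(k,\eta))$ is of class $C^2$ as well. Adding these two maps and composing with the fixed linear map $\Phi(k,0)$, we conclude from (\ref{107}) that $\eta\mapsto G(k,\eta)$ is of class $C^2$.

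It then remains to pass from a $C^2$ map to a $C^2$ diffeomorphism and to transfer the regularity to $H$. As in Theorem \ref{174}, because $G$ is a topological equivalence the map $\xi\mapsto G(k,\xi)-\xi$ is bounded, hence $G(k,\cdot)$ is coercive, and Corollary 2.1 of \cite{Plastock} shows that $\xi\mapsto G(k,\xi)$ is a global diffeomorphism; in particular $\frac{\partial G}{\partial\xi}(k,\xi)$ is invertible for every $\xi$. Differentiating the identity $G(k,H(k,\xi))=\xi$ gives $\frac{\partial H}{\partial\xi}(k,\xi)=\left[\frac{\partial G}{\partial\xi}(k,H(k,\xi))\right]^{-1}$. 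Since $H(k,\cdot)$, being the inverse of a $C^1$ diffeomorphism, is at least $C^1$, since $\frac{\partial G}{\partial\xi}$ is $C^1$, and since matrix inversion is smooth on $\mathrm{GL}_d(\mathbb{R})$, the right-hand side of this formula is $C^1$; hence $H(k,\cdot)$ is $C^2$ for every fixed $k$, which is exactly the assertion of $C^2$-topological equivalence on $\mathbb{Z}^+$.

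Within the present statement there is no real obstacle: all the analytic difficulty has been isolated into Lemma \ref{193}, whose proof (via a Lebesgue dominated convergence argument applied to the second difference quotient of $\partial w^*/\partial\eta$, exactly as in Lemma \ref{170}) relies on producing the summable $\eta$-uniform dominating function controlled by (\ref{200}). The only point worth a line of care here is that the bootstrap giving $C^2$-regularity of $H$ needs nothing beyond the everywhere-invertibility of $\frac{\partial G}{\partial\xi}$ supplied by Plastock's corollary, together with the elementary fact that the inverse of a $C^2$ bijection with everywhere-invertible derivative is again $C^2$; no additional spectral or structural hypothesis on the dichotomy is needed.
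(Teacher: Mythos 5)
Your proposal is correct and follows exactly the route the paper intends: the paper's own justification of Theorem \ref{194} is the single remark that it ``follows easily in the same fashion as the proof of Theorem \ref{174}'', and your argument is precisely that proof with Lemma \ref{170} replaced by Lemma \ref{193}, Remark \ref{191} invoked with $r=2$, the decomposition (\ref{107}) of $G$, and the Plastock/inverse-function-theorem bootstrap to transfer $C^2$-regularity to $H$. No discrepancy to report.
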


Note that Theorem \ref{194} follows easily in the same fashion as the proof of Theorem \ref{174}.

\begin{corollary}
Suppose conditions \textbf{(d0)} and \textbf{(d5)} are satisfied. Also, consider system (\ref{98}) admits a nonuniform exponential dichotomy, which means there are two complementary invariant projectors $P(\cdot)$ and $Q(\cdot)$ and constants $C,\lambda, \varepsilon>0$ such that
$$\left\{ \begin{array}{lc}
            \norm{\Phi(k,n)P(n)}\leq C
            e^{-\lambda(k-n)+\varepsilon n}, &\forall k \geq n\geq 0 \\
            \\ \norm{\Phi(k,n)Q(n)}\leq C
            e^{\lambda(k-n)+\varepsilon n}, &\forall 0\leq k\leq n.
             \end{array}
   \right.$$
   
Furthermore, suppose that for every $k\in \mathbb{Z}^+$ $u\mapsto f(k,u)$ is a $C^2$ function such that:
\begin{equation*}
    |f(k,u)|\leq \kappa e^{-\tau (k+1)},
\end{equation*}

\begin{equation*}
    \norm{\frac{\partial f}{\partial u}(k,u)}\leq \nu e^{-\varepsilon (k+1)}
\end{equation*}
and 
\begin{equation}\label{197}
    \norm{\frac{\partial f}{\partial u}(k,x)-\frac{\partial f}{\partial u}(k,y)}\leq \zeta e^{-\varepsilon (k+1)}|x-y|,
\end{equation}

for given $\kappa,\nu ,\zeta>0$ and $\tau > \varepsilon - \lambda$. Then, if $M^2e^{-\lambda}<1$, for sufficiently small $\nu>0$, the systems (\ref{98}) and (\ref{99}) are $C^2$-topologically equivalent on $\mathbb{Z}^+.$
\end{corollary}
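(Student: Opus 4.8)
The plan is to reproduce the argument of Corollary \ref{176} verbatim to recover conditions \textbf{(d0)--(d7)}, and then to verify the three extra hypotheses of Lemma \ref{193}, after which Theorem \ref{194} delivers the $C^{2}$-topological equivalence.

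First I would set $D(n)=Ce^{\varepsilon n}$, $h(n)=e^{-\lambda n}$ and, using the generalized intermediate point theorem on the first-derivative bound, $\gamma(k)=\nu e^{-\varepsilon(k+1)}$ and $\mu(k)=\kappa e^{-\tau(k+1)}$, exactly as in Corollary \ref{176}. The hypothesis $\tau>\varepsilon-\lambda$ again yields \textbf{(d3)}, smallness of $\nu$ gives \textbf{(d4)}, and since $M>1$ the inequality $M^{2}e^{-\lambda}<1$ forces $Me^{-\lambda}<1$, so \textbf{(d7)} follows as there; condition \textbf{(d6)} with $r=2$ is part of the hypotheses, and \textbf{(d5)} is assumed. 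Next, condition (\ref{198}) of Lemma \ref{193} follows from (\ref{197}) by the mean value theorem applied to $u\mapsto\frac{\partial f}{\partial u}(k,u)$, with $\Gamma(j)=\zeta e^{-\varepsilon(j+1)}$.

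The substantive step is (\ref{199}). Differentiating the recursion $y(j+1,m,\eta)=A(j)y(j,m,\eta)+f(j,y(j,m,\eta))$ twice with respect to $\eta$ produces the second variational equation
\[
\frac{\partial^{2} y}{\partial\eta^{2}}(j+1)=\Bigl[A(j)+\frac{\partial f}{\partial u}(j,y(j))\Bigr]\frac{\partial^{2} y}{\partial\eta^{2}}(j)+\frac{\partial^{2} f}{\partial u^{2}}(j,y(j))\Bigl(\frac{\partial y}{\partial\eta}(j)\Bigr)^{2},\qquad \frac{\partial^{2} y}{\partial\eta^{2}}(m)=0.
\]
Using $\norm{A(j)+\frac{\partial f}{\partial u}(j,\cdot)}\le M+\nu e^{-\varepsilon}$, the estimate $\norm{\frac{\partial y}{\partial\eta}(j,m,\eta)}\le\mathcal{B}_{m}(j)\le(M+\nu e^{-\varepsilon})^{\,j-m}$ obtained in the proof of Lemma \ref{170}, and (\ref{198}), a discrete variation-of-constants estimate yields a bound of the form $\pi_{m}(j)\le\widetilde{C}_{m}\,(j-m)\,(M+\nu e^{-\varepsilon})^{2j}$; that is, $\pi_{m}(j)$ grows at most like $(M+\nu e^{-\varepsilon})^{2j}$ up to a polynomial factor.

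Finally I would insert these sequences into (\ref{200}). Since $D(j+1)h(j+1)=Ce^{(\varepsilon-\lambda)(j+1)}$ while $\gamma(j)=\nu e^{-\varepsilon(j+1)}$, $\Gamma(j)=\zeta e^{-\varepsilon(j+1)}$, and $\bigl[\prod_{p=m}^{j-1}\norm{A(p)}+\gamma(p)\bigr]^{2}\le(M+\nu e^{-\varepsilon})^{2(j-m)}$, both summands in (\ref{200}) are, up to a polynomial prefactor, constant multiples of $e^{-\lambda(j+1)}(M+\nu e^{-\varepsilon})^{2j}$; hence the series converges whenever $(M+\nu e^{-\varepsilon})^{2}e^{-\lambda}<1$, which holds for sufficiently small $\nu>0$ because $M^{2}e^{-\lambda}<1$. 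Applying Theorem \ref{194} then finishes the proof. I expect the main obstacle to be deriving and correctly estimating the second variational equation so that $\pi_{m}(j)$ is dominated by $(M+\nu e^{-\varepsilon})^{2j}$; once that exponential rate is secured the polynomial prefactor is harmless, since the geometric ratio $(M+\nu e^{-\varepsilon})^{2}e^{-\lambda}$ can be pushed below $1$ by shrinking $\nu$.
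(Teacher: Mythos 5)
Your proposal is correct, and its overall architecture (recover \textbf{(d0)--(d7)} exactly as in Corollary \ref{176}, deduce (\ref{198}) from (\ref{197}) with $\Gamma(j)=\zeta e^{-\varepsilon(j+1)}$, verify (\ref{199}) and (\ref{200}), and invoke Theorem \ref{194}) matches the paper's proof. The one place where you genuinely diverge is the verification of (\ref{199}). The paper does not write the second variational equation at all: it works with the \emph{first} variational equation (\ref{201}), shows by induction a Lipschitz estimate $\norm{z(j,m,\eta)-z(j,m,\Tilde{\eta})}\leq\bigl(\prod_{i=m}^{j-1}\Gamma(i)\bigr)|\eta-\Tilde{\eta}|$ for $z=\frac{\partial y}{\partial\eta}$, and takes $\pi_m(j)=\prod_{i=m}^{j-1}\Gamma(i)$, which decays geometrically since $\Gamma(j)\to 0$; the hypothesis $M^2e^{-\lambda}<1$ is then used only for the $\Gamma(j)\bigl[\prod(\norm{A(p)}+\gamma(p))\bigr]^2$ term of (\ref{200}). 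You instead differentiate the recursion twice, obtain the second variational equation with zero initial datum, and run a discrete variation-of-constants estimate giving $\pi_m(j)\lesssim (j-m)(M+\nu e^{-\varepsilon})^{2j}$, so that \emph{both} summands of (\ref{200}) are controlled by the same geometric ratio $(M+\nu e^{-\varepsilon})^2e^{-\lambda}<1$. Your route costs a growing bound on $\pi_m$ that must be absorbed by $D(j+1)h(j+1)\gamma(j)=C\nu e^{-\lambda(j+1)}$, but it buys a cleaner and more defensible estimate: the paper's induction ignores the propagation of the difference $z(j,\eta)-z(j,\Tilde{\eta})$ through the coefficient $A(j)+\frac{\partial f}{\partial u}$, whose norm is of order $M$ rather than $\Gamma(j)$, so the pure product $\prod\Gamma(i)$ is optimistic, whereas your variation-of-constants bound accounts for exactly that propagation. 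One small quibble: (\ref{198}) follows from (\ref{197}) by passing to the limit in difference quotients (as done for $\gamma$ in Lemma \ref{170}), not by the mean value theorem, which gives the reverse implication; this does not affect the argument.
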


\begin{proof} By Corollary \ref{176} we know that conditions \textbf{(d0)-(d7)} are verified and hence they are $C^1$-topologically equivalent on $\mathbb{Z}^+$. In particular, \textbf{(d4)} is satisfied with $\gamma(j)=\nu e^{-\varepsilon(j+1)}$.\\ 

It is easy to see that (\ref{197}) implies (\ref{198}), with $\Gamma(j)=\zeta e^{-\varepsilon
(j+1)}$. In addition, if $M^2e^{-\lambda}<1$, this implies, in a similar fashion as in the proof of \ref{176}, that
    \begin{equation}\label{202}
    \mathlarger{\sum_{j=m}^{\infty}}\left(D(j+1)h(j+1)\Gamma(j)\left[ \mathlarger{\prod_{p=m}^{j-1}}\norm{A(p)}+\gamma(p)\right]^2 \right)<+\infty.
    \end{equation}

Fix $m\in \mathbb{Z}$. We know that $\eta \mapsto \frac{\partial y}{\partial \eta}(j,m,\eta)$ is well defined for every $j\geq m$. Consider the matrix initial value problem:
\begin{equation}\label{201}
    \left\{ \begin{array}{ccl}
            z(j+1) & = & \left[ A(j)+\frac{\partial f}{\partial u}(j,y(j,m,\eta)\right]z(j)\\
            \\z(m) & = & I.
\end{array}
\right.\end{equation}

Note that $j\mapsto z(j,m,\eta)=\frac{\partial y}{\partial \eta}(j,m,\eta)$ is a solution of (\ref{201}), hence
$$z(m+1,m,\eta)=\left[A(m)+\frac{\partial f}{\partial u}(m,y(m,m,\eta))\right] z(m,m,\eta)=A(m)+\frac{\partial f}{\partial u}(m,\eta).$$

Then, we have
$$\norm{z(m+1,m,\eta)-z(m+1,m,\Tilde{\eta})}=\norm{\frac{\partial f}{\partial u}(m,\eta)-\frac{\partial f}{\partial u}(m,\Tilde{\eta})}\leq \Gamma(m)|\eta-\Tilde{\eta}|.$$

So, inductively we can prove
$$\norm{z(j,m,\eta)-z(j,m,\Tilde{\eta})}\leq\left( \prod_{i=m}^{j-1}\Gamma(i)\right) |\eta-\Tilde{\eta}|.$$

Which implies that $\pi_m(j)=\prod_{i=m}^{j-1}\Gamma(i)$ satisfies condition (\ref{199}) from Lemma \ref{193}. As $\Gamma(j)\to 0$ when $j\to \infty$, it is easy to see there are constants $a_m,b>0$, $b<1$ such that $\pi_m(j)\leq  a_mb^j$, which allows us to easily see
    \begin{equation*}
    \mathlarger{\sum_{j=m}^{\infty}}\left(D(j+1)h(j+1)\pi_m(j)\gamma(j)\right)<+\infty.
    \end{equation*}
    
Now, condition (\ref{200}) from Lemma \ref{193} is satisfied if we consider (\ref{202}). Finally, applying Theorem \ref{194} we obtain the result.
\end{proof}

\section{Discussion}

Suppose that conditions \textbf{(d0)-(d6)} are verified. Suppose as well there are functions $\Gamma_s:\mathbb{Z}^+\to \mathbb{R}^+$, $\pi_{s,m}:\mathbb{Z}^+\to \mathbb{R}^+$, $1\leq s \leq r$, $m \in \mathbb{Z}^+$ such that: 
\begin{equation}\label{195}
    \norm{\frac{\partial^s f}{\partial u^s}(j,u)}\leq \Gamma_s(j)\text{ , for every }j\in \mathbb{Z}^+
\end{equation}
and 
\begin{equation}\label{196}
    \norm{\frac{\partial^s y}{\partial \eta^s}(j,m,\eta)}\leq \pi_{s,m}(j)\text{ , for every }j\geq m.
\end{equation}

Chose a fixed $m \in \mathbb{Z}^+$ and consider the set of functions
$$\mathfrak{S}_m=\left\{\Gamma_s\prod_{k=1}^r\pi_{k,m}^{e_k}: 1\leq s \leq r, e_k\in \mathbb{Z}^+_0 \right\}.$$

Consider the module $\mathbb{Z}[\mathfrak{S}_m]$ and define the $\mathbb{Z}-$linear map $\mathbb{D}_m:\mathbb{Z}[\mathfrak{S}_m]\to \mathbb{Z}[\mathfrak{S}_m]$ given by\\
\begin{itemize}
    \item $\mathbb{D}_m(\Gamma_s)=\Gamma_{s+1}\pi_{1,m}$, for every $1\leq s < r.$
    \item $\mathbb{D}_m(\pi_{s,m})=\pi_{s+1,m}$, for every $1\leq s < r.$
    \item $\mathbb{D}_m(fg)=\mathbb{D}_m(f)g+f\mathbb{D}_m(g)$, for every $f,g \in \mathbb{Z}[\mathfrak{S}_m]$ such that $fg\in \mathbb{Z}[\mathfrak{S}_m].$\\
\end{itemize}

In order to simplify notations, suppose the existence of a certain function $\Gamma_0$ such that $\mathbb{D}_m(\Gamma_0)(j)=\Gamma_1(j)\pi_{1,m}(j)$. Under these conditions it is easy to see:\\
\begin{itemize}
    \item $\mathbb{D}^2_m(\Gamma_0)(j)=\Gamma_2(j)\pi_{1,m}(j)^2+\Gamma_1(j)\pi_{2,m}(j).$
    \item $\mathbb{D}^3_m(\Gamma_0)(j)=\Gamma_3(j)\pi_{1,m}(j)^3+3\Gamma_2(j)\pi_{2,m}(j)\pi_{1,m}(j)+\Gamma_1(j)\pi_{3,m}(j).$
    \item $\mathbb{D}^4_m(\Gamma_0)(j)=\Gamma_4(j)\pi_{1,m}(j)^4+6\Gamma_3(j)\pi_{2,m}(j)\pi_{1,m}(j)^2+4\Gamma_2(j)\pi_{3,m}(j)\pi_{1,m}(j)+\Gamma_1(j)\pi_{4,m}(j)$.\\
\end{itemize}

And inductively it is easy to calculate higher powers of $\mathbb{D}_m$ applied to $\Gamma_0$. With this in mind, we would like to introduce the following condition:\\

\textbf{(DIF,r)} Suppose conditions \textbf{(d0)-(d6)} are verified. Suppose as well there are functions $\Gamma_s:\mathbb{Z}^+\to \mathbb{R}^+$, $\pi_{s,m}:\mathbb{Z}^+\to \mathbb{R}^+$, $1\leq s \leq r$, $m \in \mathbb{Z}^+$ that satisfy (\ref{195}) and (\ref{196}) respectively and are such that for every fixed $m\in \mathbb{Z}$ it is verified

$$\mathlarger{\sum_{j=m}^{\infty}}D(j+1)h(j+1)\mathbb{D}^s_m(\Gamma_0)(j)<+\infty\text{ , for every }1\leq s \leq r.$$

\begin{remark}
Note that condition \textbf{(d7)} is a particular case of \textbf{(DIF,r)}, namely \textbf{(DIF,1)}, with  $\Gamma_1(j)=\gamma(j)$ and $\pi_{1,m}(j)= \prod_{p=m}^{j-1}\norm{A(p)}+\gamma(p)$. Similarly, the conditions on Lemma (\ref{193}) can be summarized as \textbf{(DIF,2)}.\\

Moreover, condition \textbf{(d3)} is equivalent to \textbf{(DIF,0)}, defining $j\mapsto \Gamma_0(j)$ as a function that uniformly (over $u$) dominates $j\mapsto f(j,u)$, or, in the notations of \textbf{(d3)}, $\Gamma_0(j)=\mu(j)$. This makes sense, because a $C^0$ class of differentiability for a diffeomorphism corresponds to an homeomorphism, which is verified by considering \textbf{(d3)} (in addition with \textbf{(d0)-(d5)}).
\end{remark}


\begin{thebibliography}{99}


\bibitem{Barreira1}
Barreira, L., Fan, M., Valls, C., Zhang, J. Robustness of nonuniform polynomial dichotomies for difference equations. Topol. Methods Nonlinear Anal. 37 (2011), 357–376.

\bibitem{Barreira2}
Barreira, L., Valls, C. A Grobman-Hartman theorem for nonuniformly hyperbolic dynamics. J. Differential Equations 228 (2006), 285–310.

\bibitem{Barreira3}
Barreira, L., Valls, C. A simple proof of the Grobman-Hartman theorem for nonuniformly hyperbolic flows. Nonlinear Anal. 74 (2011), 7210–7225.

\bibitem{Bento}
Bento, A. J., Silva, C. M. Nonuniform $(\mu,\nu)-$dichotomies and local dynamics of difference equations. Nonlinear Anal. 75 (2012), 78–90.

\bibitem{Castaneda3}
Castañeda, \'A., Gonz\'alez, P., Robledo, G. (2020). Topological Equivalence of nonautonomous difference equations with a family of dichotomies on the half line. Communications on Pure and Applied Analysis, 20 (2021), 511--532.

\bibitem{Castaneda5}
Casta\~{n}eda, \'A., Monz\'on, P., Robledo, G. (2018). Nonuniform contractions and density stability results via a smooth topological equivalence. arXiv:1808.07568 

\bibitem{Castaneda4}
Casta\~{n}eda, \'A., Robledo, G. Differentiability of Palmer's linearization theorem and converse result for density functions. J. Differential Equations 259 (2015), 4634–4650.


\bibitem{Coffman}
Coffman, C. V., Sch\"affer, J. J. Dichotomies for linear difference equations. Math. Ann. 172 (1967), 139–166.

\bibitem{Crai}
Crai, V., Aldescu, M. On $(h,k)-$dichotomy of linear discrete-time systems in Banach spaces. Difference equations, discrete dynamical systems and applications, 257–271, Springer Proc. Math. Stat., 287, Springer, Cham, 2019.

\bibitem{Dragicevic}
Dragi$\check{c}$evi\'c, D., Zhang, W., Zhang, W. Smooth linearization of nonautonomous difference equations with a nonuniform dichotomy. Math. Z. 292 (2019), 1175–1193.

\bibitem{Dragicevic2}
Dragi$\check{c}$evi\'c, D., W. Zhang, W., Zhang, W.
Smooth linearization of nonautonomous differential equations with a nonuniform dichotomy. Proc. Lond. Math. Soc. 121 (2020), 32--50.


\bibitem{Fenner}
Fenner, J. L., Pinto, M. On a Hartman linearization theorem for a class of ODE with impulse effect.
Nonlinear Anal. 38 (1999),Ser. A: Theory Methods, 307–325.

\bibitem{Hartman}
Hartman, P. On local homeomorphisms of Euclidean spaces. Bol. Soc. Mat. Mexicana (2) 5 (1960), 220–241.

\bibitem{Jiang}
Jiang, L. Generalized exponential dichotomy and global linearization. J. Math. Anal. Appl. 315 (2006), 474–490.

\bibitem{Lin}
Lin, Z., Lin, Y. X. Linear systems exponential dichotomy and structure of sets of hyperbolic points. World Scientific Publishing Co., Inc., River Edge, NJ, 2000. xii+205 pp. 


\bibitem{Pinto2}
Naulin, R., Pinto, M. Roughness of $(h,k)-$dichotomies. J. Differential Equations 118 (1995), 20–35.

\bibitem{Palmer2}
Palmer, K. J. A characterization of exponential dichotomy in terms of topological equivalence. J. Math. Anal. Appl. 69 (1979), 8–16.

\bibitem{Palmer}
Palmer, K. J. A generalization of Hartman's linearization theorem. J. Math. Anal. Appl. 41 (1973), 753–758.

\bibitem{Palmer3}
Palmer, K. J. The structurally stable linear systems on the half-line are those with exponential dichotomies.
J. Differential Equations 33 (1979), 16–25.

\bibitem{Papas1}
Papaschinopoulos, G., Schinas, J. Criteria for an exponential dichotomy of difference equations.
Czechoslovak Math. J. 35(110) (1985), 295–299.

\bibitem{Papas2}
Papaschinopoulos, G., Schinas, J. Structural stability via the density of a class of linear discrete systems. J. Math. Anal. Appl. 127 (1987), 530–539.

\bibitem{Papas3}
Papaschinopoulos, G. Some roughness results concerning reducibility for linear difference equations.
Internat. J. Math. Math. Sci. 11 (1988), 793–804.

\bibitem{Papas4}
Papaschinopoulos, G. A linearization result for a differential equation with piecewise constant argument. Analysis 16 (1996), 161–170.

\bibitem{Pinto1}
Pinto, M. Nonautonomous semilinear differential systems: asymptotic behaviour and stable manifolds, Preprint, Dept. Mat., Fac. Cie., Univ. Chile, Santiago, (1990). 

\bibitem{Plastock}
Plastock, R. Homeomorphisms between Banach spaces.
Trans. Amer. Math. Soc. 200 (1974), 169–183.

\bibitem{Pugh}
Pugh, C. C. On a theorem of P. Hartman.
Amer. J. Math. 91 (1969), 363–367.

\bibitem{Reinfeld}
Re\u{\i}nfel'd, A. A. Global topological equivalence of nonlinear flows. (Russian) Differencial'nye Uravnenija 8 (1972), 1901–1903, 1915.

\bibitem{Shi}
Shi, J. L., Xiong, K. Q. On Hartman's linearization theorem and Palmer's linearization theorem. J. Math. Anal. Appl. 192 (1995), 813–832.





\end{thebibliography}
\end{document}